\documentclass{amsart}

\usepackage{amsmath, amssymb, amsthm}
\usepackage{amsbsy}
\usepackage{bm}
\usepackage{graphicx}
\usepackage{pgfplots}
\usepackage{epstopdf}
\usepackage{tikz}
\usetikzlibrary{patterns, calc}
\usetikzlibrary{arrows,automata}
\usepackage{caption}
\usepackage{subcaption}
\usepackage{color}
\usepackage{fullpage}
\usepackage{hyperref}
\usepackage{tabularx}
\usepackage{enumerate}
\usepackage[utf8]{inputenc}
\usepackage{makecell}

\newcommand{\Bea}{\begin{eqnarray*}}
\newcommand{\Eea}{\end{eqnarray*}}
\newcommand{\bea}{\begin{eqnarray}}
\newcommand{\eea}{\end{eqnarray}}

\newcommand{\bs}{\backslash}

\newcommand{\C}{\mathbb{C}}
\newcommand{\D}{\mathbb{D}}
\newcommand{\E}{\mathbb{E}}
\newcommand{\N}{\mathbb{N}}
\newcommand{\R}{\mathbb{R}}
\newcommand{\Z}{\mathbb{Z}}
\newcommand{\Pb}{\mathbb{P}}

\newcommand{\CE}{\mathcal{E}}
\newcommand{\CF}{\mathcal{F}}

\newcommand{\CI}{\mathcal{I}}
\newcommand{\CL}{\mathcal{L}}

\newcommand{\CN}{\mathcal{N}}

\newcommand{\CV}{\mathcal{V}}

\newcommand{\Var}{\text{Var}}

\newcommand{\comment}[1]{}

\def\1{\textbf{1}}

\def\tr{\text{tr}}
\def\sinc{\text{sinc}}

\def\bs{\backslash}

\newtheorem{thm}{Theorem}[section]
\newtheorem{prop}[thm]{Proposition}
\newtheorem{lem}[thm]{Lemma}
\newtheorem{cor}[thm]{Corollary} 

\theoremstyle{definition}
\newtheorem{dfn}[thm]{Definition}
\newtheorem{rem}[thm]{Remark}
\newtheorem{con}[thm]{Condition}

\numberwithin{equation}{section}

\allowdisplaybreaks

\title[CLT for random band matrices]{CLT for non-Hermitian random band matrices with variance profiles}
\author{Indrajit Jana}
\address{School of Basic Sciences, IIT Bhubaneswar, India, 752050}
\email{ijana@iitbbs.ac.in}
\date{\today}

\begin{document}
\maketitle

\begin{abstract}
 	We show that the fluctuations of the linear eigenvalue statistics of a non-Hermitian random band matrix of increasing bandwidth $b_{n}$ with a continuous variance profile $w_{\nu}(x)$ converges to a $N(0,\sigma_{f}^{2}(\nu))$, where $\nu=\lim_{n\to\infty}(2b_{n}/n)\in [0,1]$ and $f$ is the test function. When $\nu\in (0,1]$, we obtain an explicit formula for $\sigma_{f}^{2}(\nu)$, which depends on $f$,  and variance profile $w_{\nu}$. When $\nu=1$, the formula is consistent with Rider and Silverstein (2006) \cite{rider2006gaussian}. We also independently compute an explicit formula for $\sigma_{f}^{2}(0)$ i.e., when the bandwidth $b_{n}$ grows slower compared to $n$. In addition, we show that $\sigma_{f}^{2}(\nu)\to \sigma_{f}^{2}(0)$ as $\nu\downarrow 0$.
\end{abstract}

\noindent
\textbf{Keywords: } Random band matrices, random matrices with a variance profile, central limit theorem, linear eigenvalue statistics.

\section{Introduction}

In this article, we consider the linear eigenvalue statistics of random non-Hermitian band matrices with a variance profile. Let $M$ be an $n\times n$ random non-Hermitian matrix and $\lambda_{i}(M);1\leq i\leq n$ be its eigenvalues. Define the empirical spectral measure (ESM) of $M$ as

\begin{align*}
\mu_{M}=\frac{1}{n}\sum_{i=1}^{n}\delta_{\lambda_{i}(M)},
\end{align*}
where $\delta_{x}$ is unit point mass at $x$. It was shown, in a series of papers, that if the entries of $M$ are i.i.d. random variables with zero mean and unit variance, then asymptotically $\mu_{M/\sqrt{n}}$ converges to the uniform density on the unit disc in $\C$ \cite{girko1985circular, bai1997circular, edelman1997probability, tao2008random, tao2010random}. However, if the entries are not identically distributed, the limiting law may be different. In particular, when the entries of the matrix are multiplied by some predetermined weights, the matrix is called a random matrix with a variance profile. Limiting ESM of such matrices were found in \cite{cook2018non}.

In an analogous way to classical probability, limiting ESM is the law of large numbers for random matrices. One might be interested in finding fluctuations of such convergence after proper scaling, which is the central limit theorem (CLT) in classical probability. In the case of random matrices, we would be studying CLT of the sequence of random measures (the ESMs). One way to study such object is by studying $\int f\;d\mu_{M}$ for some test function $f$. This brings the question from the space of random measures to the space of real/complex valued random variables. More precisely, we define the linear eigenvalue statistics of $M$ with respect to a test function $f$ as
\begin{align*}
\CL_{f}(M)&:=n\int f\;d\mu_{M}=\sum_{i=1}^{n}f(\lambda_{i}(M)),\\
\CL_{f}^{\Delta}(M)&:=\CL_{f}(M)-nf(0).
\end{align*}
We consider the limiting distribution of $\CL_{f}^{\Delta}(M)$. Limiting behavior of such quantities were studied in \cite{johansson1998fluctuations, soshnikov1998tracecentral, lytova2009central}, \cite[section 5]{bourgade2016fixed} for Hermitian matrices; and in \cite{anderson2006clt, li2013central, jana2014fluctuations, shcherbina2015fluctuations} for Hermitian band matrices.

 In this article, we consider non-Hermitian matrices $M$ whose entries are complex valued random variables. Distributional limit of such objects was found in \cite{nourdin_peccati_2010, bauerschmidt2016two, rider2004deviations, rider2006gaussian, rider2007noise}, which was later extended in \cite{ameur2011fluctuations, o2016central, bekerman2018clt, leble2018fluctuations}. CLT for polynomial $f$ and real valued $M$ were studied in \cite{nourdin_peccati_2010}. More recently, CLT for products of random matrices were established in \cite{coston2018gaussian, gorin2018gaussian}; and words of random matrices were studied in \cite{ dubach2019words}.

In both the cases \cite{rider2006gaussian, nourdin_peccati_2010},  the matrix $M$ was a full matrix without any variance profile. Recently, for polynomial test functions, it was shown that $\CL_{f}^{\Delta}(M)$ for random symmetric/non-symmetric matrices with a variance profile converges to $\CN(0,\sigma_{f}^{2})$ in total variation norm \cite{adhikari2019linear}. However, since the results in \cite{adhikari2019linear} were stated in a very broad context, the exact expression of $\sigma^{2}_{f}$ was difficult to find. 

The main contribution of this article is calculating $\sigma_{f}^{2}$ for random band matrices with a variance profile. In \cite{rider2006gaussian}, the variance was calculated in the process of proving the CLT. The same procedure does not yield the variance in our case. So, the proof of CLT and calculation of variance is done using two separate methods. An $n\times n$ non-periodic (and periodic) band matrix of bandwidth $b_{n}$ is obtained by keeping $2b_{n}$ many off-diagonal vectors around the main diagonal (and around the corners), and setting rest of the off-diagonal vectors to zero.

\begin{figure}[h!]
	\centering
	\begin{subfigure}{0.45\textwidth}
		\centering
		\begin{tikzpicture}[scale=4.0]
		\draw (0,0) -- (1,0) -- (1,1) -- (0,1) -- (0,0);
		\draw [pattern=north west lines, pattern color=blue] (1, 0.75) -- (1,1) -- (0.75,1) -- (1,0.75);
		\draw [pattern=north west lines, pattern color=blue] (1, 0) -- (1,0.25) -- (0.25,1) -- (0, 1) -- (0, 0.75) -- (0.75, 0) -- (1, 0);
		\draw [pattern=north west lines, pattern color=blue] (0, 0) -- (0.25,0) -- (0, 0.25) -- (0, 0);
		\draw [<->] (0, 1.05) -- (0.25, 1.05)
		node  at (0.12, 1.1){$b_{n}$};
		\draw [<->] (0.75, 1.05) -- (1, 1.05) 
		node  at (0.87, 1.1){$b_{n}$};
		\end{tikzpicture}
		\caption{Periodic band matrix\\ of bandwidth $b_{n}$}
	\end{subfigure}
	\begin{subfigure}{0.45\textwidth}
		\centering
		\begin{tikzpicture}[scale=4.0]
		\draw (0,0) -- (1,0) -- (1,1) -- (0,1) -- (0,0);
		\draw [pattern=north west lines, pattern color=blue] (1, 0) -- (1,0.25) -- (0.25,1) -- (0, 1) -- (0, 0.75) -- (0.75, 0) -- (1, 0);
		\draw [<->] (0, 1.05) -- (0.25, 1.05)
		node at (0.12, 1.1){$b_{n}$};
		\end{tikzpicture}
		\caption{Non-periodic band matrix\\ of bandwidth $b_{n}$}
	\end{subfigure}
\caption{Blue lines represent the non-zero diagonal vectors. }
\end{figure}
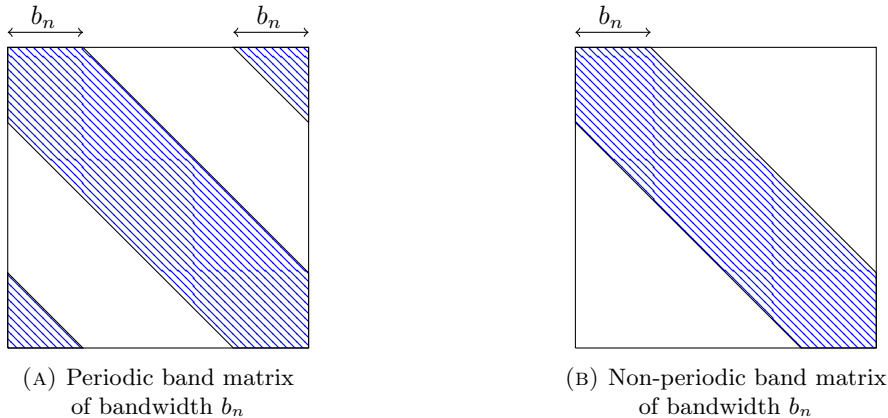

A precise definition of random band matrix is given in the Definition \ref{defn: band matrix with a variance profile}. In particular, we show that if we have a periodic band matrix with $2b_{n}+1=n$, then our results are consistent with that of \cite{rider2006gaussian}. In this context, we would also like to mention that while in full matrix case the unscaled $\CL_{f}^{\Delta}(M)$ converges to a Gaussian distribution, in band case we need to scale it by $\sqrt{(2b_{n}+1)/n}$. This shows a significant difference in between full and band matrices. In the first case $\Var(\CL_{f}^{\Delta}(M))$ remains constant, while in the latter case it grows as $O(n/b_{n})$.

The article is organized as follows. In Section \ref{sec: Preliminaries}, we enlist the notations and definitions. The main theorem is formulated in Section \ref{sec: main theorem}, and the proof is given in Section \ref{sec: proof of the main theorem}. In the process of the proof, we need the norm of the random matrix to be bounded almost surely, which is discussed in appendix \ref{sec: norm of the matrix}. Variance of the limiting distribution is calculated in Section \ref{sec: variance calculation}.

\textbf{Acknowledgment:} The author gratefully acknowledges many technical discussions with Brian Rider. The author also conveys thanks to Alexander Soshnikov; and Kartick Adhikari, Koushik Saha for mathematical discussions and sponsoring a visit to UC Davis; and providing constructive feedbacks about the manuscript. In addition, the author appreciates many constructive feedback provided by the referees.

\section{Preliminaries and notations}\label{sec: Preliminaries} For convenience, we do not indicate the size of a matrix in its name. For example, to denote an $n\times n$ matrix $A$, we simply write $A$ instead of $A_{n}$. In addition, throughout this paper we use the following notations;
\begin{align*}
&\{e_{1},e_{2},\ldots,e_{n}\} \;\text{is the cannonical basis of $\C^{n}$}\\
&I_{j}\;\;\text{is a band index set as defined in the Definition \ref{defn: band matrix with a variance profile}}\\
&\CI_{j}\;\;\text{is a band diagonal matrix as defined in the Definition \ref{defn: band matrix with a variance profile}}\\
&a_{ij}:=\text{$ij$th entry of the matrix $A$}\\
&a_{k}:=\text{$k$th column of the matrix $A$}\\
&A^{(\Lambda)}:=\text{The matrix $A$ after setting columns which are indexed by $\Lambda$ to zero}\\
&A^{*}:=\text{Complex conjugate transpose of the matrix $A$}\\
&R_{z}(A):=(zI-A)^{-1},\;\;\text{if $A$ is a square matrix}\\
&\tr R_{z}^{\Delta}(A):=\tr R_{z}(A)-\frac{n}{z}\\
&\lambda_{1}(A),\lambda_{2}(A),\ldots, \lambda_{n}(A) \;\;\text{are the eigenvalues of an $n\times n$ matrix $A$}\\
&\mu_{A}:=\frac{1}{n}\sum_{i=1}^{n}\delta_{\lambda_{i}(A)},\;\;\text{for an $n\times n$ matrix $A$}\\
&\CL_{f}(A):=\sum_{i=1}^{n}f(\lambda_{i}(A))\\
&\CL_{f}^{\Delta}(A):=\CL_{f}(A)-nf(0)\\
&\|v\|_{p}:=\left(\sum_{k=1}^{n}|v_{k}|^{p}\right)^{1/p} \;\text{for}\;1\leq p<\infty, \;\text{and}\; \|v\|_{\infty}=\max_{k}|v_{k}|,\;\text{where}\;v\in \C^{n}\\
&\|A\|:=\sup_{\|v\|_{2}=1}\|Av\|_{2};\;\text{operator norm of the matrix $A$}\\
&\D_{r}:=\{z\in \C:|z|\leq r\}\\
&\partial\D_{r}:=\{z\in \C:|z|=r\}\\
&\CN_{k}(\mu, \Sigma, \Upsilon):=\text{$k$ dimensional multivatiate normal distribution}\\
	&\;\;\; \text{with mean $\mu$, covariance matrix $\Sigma$, and pseudo-covariance matrix $\Upsilon$}\\
&\CE_{k}[\xi]:=\text{Expectation of the random variable $\xi$ with respect to the}\\
	&\;\;\; \text{$k$th column of the underlying random matrix $M$}\\
&\E_{k}[\xi]:=\CE_{k+1}\CE_{k+2}\ldots\CE_{n}[\xi]. \text{ In particular, $\E_{0}[\xi]=\E[\xi]$ and $\E_{n}[\xi]=\xi$} \\
&\xi^{\circ}:=\xi-\E[\xi]\\
&\xi^{\circ}_{k}:=\xi-\CE_{k}[\xi]\\
&\xi_{n}\stackrel{d}{\to}\xi  \text{ denotes that $\xi_{n}$ converges to $\xi$ in distribution}\\
&\xi_{n}\stackrel{p}{\to}\xi  \text{ denotes that $\xi_{n}$ converges to $\xi$ in probability}\\
&\xi_{n}\stackrel{a.s.}{\to}\xi  \text{ denotes that $\xi_{n}$ converges to $\xi$ almost surely}\\
&a_{n}=\Omega(b_{n})\;\;\text{denotes that there exists $\;c>0$ such that $a_{n}\geq cb_{n}$ for all $n$}
\end{align*}
\begin{dfn}[Band matrix with a variance profile]\label{defn: band matrix with a variance profile} Let $\nu\in (0,1]$, and $w:\R\to[0,\infty)$ be a piece-wise continuous function supported on $[-1/2,1/2]$ such that it is continuous at $0$ and  $\int_{-1/2}^{1/2}w(x)\;dx=1$. Define a $1/\nu$ periodic function $w_{\nu}$ and a non-periodic function $w_{0}$ as follows 
\begin{align*}
w_{\nu}(x)&=w(x), \;\;\forall\;x\in \left[-\frac{1}{2\nu},\frac{1}{2\nu}\right],\;\;\;\;\&\;\;w_{\nu}\left(x+\frac{1}{\nu}\right)=w_{\nu}(x)\\
w_{0}(x)&=w(x),\;\;\forall\;x\in \R.
\end{align*}
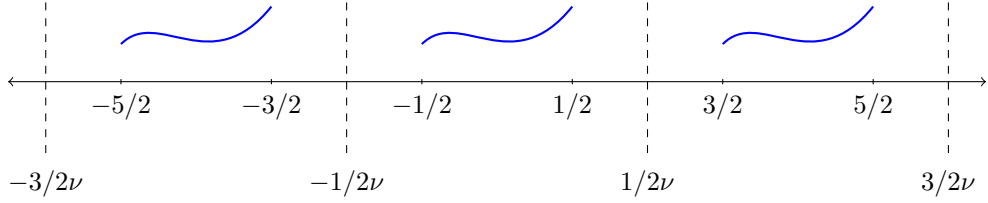
\begin{figure}[h!]
	\begin{tikzpicture}
	\draw [<->] (-6.5,0) -- (6.5, 0);
	\foreach \x in {-3,-1,1,3}
	\draw [dashed] (2*\x cm,30pt) -- (2*\x cm,-30pt) node[anchor=north] {$\x/2\nu$};
	\foreach \x in {-5,-3,-1,1,3,5}
	\draw (\x cm,1pt) -- (\x cm,-1pt) node[anchor=north] {$\x/2$};
	\draw [thick, blue] (-5,0.5) .. controls (-4.5,1) and (-3.8,0) .. (-3,1);
	\draw [thick, blue] (-1,0.5) .. controls (-0.5,1) and (0.2,0) .. (1,1);
	\draw [thick, blue] (3,0.5) .. controls (3.5,1) and (4.2,0) .. (5,1);
	\end{tikzpicture}
	\caption{$w_{\nu}$ is constructed by copying and pasting $w(x)\1_{[-1/2\nu,1/2\nu]}$ in each $[(2k-1)/2\nu, (2k+1)/2\nu]$. This particular figure is for $\nu=1/2$.}
\end{figure}
In particular, $w_{\nu}(x)\stackrel{\nu\to 0}{\to}w_{0}(x)$ on any compact subset of $\R$. Let $\{x_{ij}:1\leq i,j\leq n\}$ be a set of i.i.d. random variables for each $n$, and $b_{n}=(c_{n}-1)/2$.
\begin{enumerate}
\item When $\lim_{n\to\infty}(c_{n}/n)=\nu\in (0,1]$, define a periodic band matrix $M_{\nu}^{\circledcirc}$ of bandwidth $b_{n}$ as
\begin{align*}
m_{ij}=\frac{1}{\sqrt{c_{n}}}x_{ij}\sqrt{w_{\nu}\left(\frac{i-j}{c_{n}}\right)}.
\end{align*}

\item When $c_{n}=o(n)$, define non-periodic band matrix $M_{0}^{\Large{\oslash}}$ of bandwidth $b_{n}$ as
\begin{align*}
m_{ij}=\frac{1}{\sqrt{c_{n}}}x_{ij}\sqrt{w_{0}\left(\frac{i-j}{c_{n}}\right)},
\end{align*}
and a periodic band matrix $M_{0}^{\circledcirc}$ of bandwidth $b_{n}$ as 
\begin{align*}
m_{ij}&=\frac{1}{\sqrt{c_{n}}}x_{ij}\sqrt{w_{0}\left(\frac{i-j}{c_{n}}\right)}
+\frac{1}{\sqrt{c_{n}}}x_{ij}\sqrt{w_{0}\left(\frac{i-j+n}{c_{n}}\right)}
+\frac{1}{\sqrt{c_{n}}}x_{ij}\sqrt{w_{0}\left(\frac{i-j-n}{c_{n}}\right)}.
\end{align*}
\end{enumerate}

In this context, let us also define the band index set $I_{j}$, and the band diagonal matrix $\CI_{j}$ as follows
\begin{align*}
I_{j}&:=\left\{\begin{array}{ll}
\{1\leq i\leq n: \min\{|i-j|, n-|i-j|\}\leq b_{n}\}\, & \text{if $M$ is periodic}\\
\{1\leq i\leq n:|i-j|\leq b_{n}\}, & \text{if $M$ is non-periodic}
\end{array}
\right.\\
\CI_{j}&:=\left\{\begin{array}{ll}
\sum_{i\in I_{j}}e_{i}e_{i}^{t}w_{\nu}\left(\frac{i-j}{c_{n}}\right), & \text{if $\nu\in (0,1]$}\\
\sum_{i\in I_{j}}e_{i}e_{i}^{t}w_{0}\left(\frac{i-j}{c_{n}}\right), & \text{if $\nu=0$}.
\end{array}
\right.
\end{align*}
In particular, we observe that if $\E[x_{ij}]=0$ and $\E[|x_{ij}|^{2}]=1$ for all $1\leq i,j\leq n$, then
\begin{align*}
\E[m_{j}m_{j}^{*}]=\frac{1}{c_{n}}\CI_{j},
\end{align*}
where $m_{j}$ is the $j$th column of $M$, which is one of the $M_{\nu}^{\circledcirc}, M_{0}^{\circledcirc}, M_{0}^{\oslash}$ in the above.
\end{dfn}

In the above definition, we notice that if we take $w(x)\equiv 1$, then it yields band matrices without any variance profile i.e., identical variances. We also observe that if the periodic band matrix is a full matrix then $\nu=1$.

We would also like to mention that when $\nu\in (0,1]$, we are considering only periodic band matrices; and when $\nu=0$, we are considering both periodic and non-periodic band matrices. In short, we are not considering non-periodic band matrices when $\nu\in(0,1]$. The CLT may still be true for non-periodic band matrices with $\nu\in (0,1]$. However, our method of variance calculation does not work in this case; as outlined in Remarks \ref{rem: why periodic2}, \ref{rem: why periodic}.

\begin{dfn}[Poincar\'e inequality]\label{defn: poincare inequality}
	A complex random variable $\xi$ is said to satisfy Poincar\'e inequality with constant $\alpha\in (0,\infty)$ if for any differentiable function $h:\C\to\C$, we have $\Var(h(\xi))\leq \frac{1}{\alpha}\E[|h'(\xi)|^{2}]$. Here $\C$ is identified with $\R^{2}$.
\end{dfn}

Here are some properties of Poincar\'e inequality
\begin{enumerate}
	\item If $\xi$ satisfies Poincar\'e inequality with constant $\alpha$, then $c\xi$ also satisfies Poincar\'e inequality with constant $\alpha/ c^{2}$ for any $c\in \R\bs \{0\}$.
	\item If two independent random variables $\xi_{1},\xi_{2}$ satisfy the Poincar\'e inequality with the same constant $\alpha$, then for any differentiable function $h:\C^{2}\to\C$, $\Var(h(\xi))\leq \frac{1}{\alpha}\E[\|\nabla h\|_{2}^{2}]$ i.e., $\xi:=(\xi_{1},\xi_{2})$ also satisfies Poincar\'e inequality with the same constant $\alpha$.
	\item\label{Poincare exponential tail bound} \cite[Lemma 4.4.3]{anderson2010introduction} If $\xi\in \C^{N}$ satisfies Poincar\'e inequality with constant $\alpha$, then for any differentiable function $h:\C^{N}\to\C$
	\begin{align*}
	\Pb(|h(\xi)-\E[h(\xi)]|>t)\leq K\exp\left\{-\frac{\sqrt{\alpha}t}{\sqrt{2}\|\|\nabla h\|_{2}\|_{\infty}} \right\},
	\end{align*}
	where $K=-2\sum_{i=1}^{\infty}2^{i}\log(1-2^{-2i-1})$. Here $\C^{N}$ is identified with $\R^{2N}$. In the above, $\|\nabla h(x)\|_{2}$ denotes the $\ell^{2}$ norm of the $N$-dimensional vector $\nabla h(x)$ at $x\in \C^{N}$; and $\|\|\nabla h\|_{2}\|_{\infty}=\sup_{x\in \C^{N}}\|\nabla h(x)\|_{2}$. In particular if $h:\C^{N}\to \C$ is a Lipschitz function with lipschitz constant $\|h\|_{Lip}$, then  $\|\|\nabla h\|_{2}\|_{\infty}\leq \|h\|_{Lip}$.
\end{enumerate}

For example, Gaussian random variables and compactly supported continuous random variables satisfy Poincar\'e inequality.

\section{Main result}\label{sec: main theorem}

\begin{con}\label{con: The condition}
	Let $\{x_{ij}:1\leq i,j\leq n\}$ be an i.i.d. set of complex-valued continuous random variables, and $M$ be one of the random band matrices as defined in the Definition \ref{defn: band matrix with a variance profile}. Assume that 
	\begin{enumerate}[(i)]
		\item $\E[x_{ij}]=0$ and $\E[|x_{ij}|^{2}]=1$ for all $1\leq i,j\leq n$,
		\item $x_{ij}$s are continuous random variables with bounded density and satisfy the Poincar\'e inequality with some universal constant $\alpha$. In particular, $\E[|x_{ij}|^{k}]\leq Ck^{2k}$ for all $k\geq 2$ and for some universal constant $C$.
		\item Either of the following is true.
		\begin{enumerate}[(a)]
			\item $n\geq c_{n}\geq \log^{3}n$, and $\E[x_{ij}^{k}]=0$ for all $1\leq i,j\leq n$; $k\in \N$,
			\item $\lim_{n\to\infty}(c_{n}/n)=\nu\in (0,1]$ and $\E[x_{ij}^{2}]=0$ for all $1\leq i, j\leq n$.
		\end{enumerate}
	\end{enumerate}
\end{con}

Here the Poincar\'e inequality is assumed for both $c_{n}=o(n)$ as well as $c_{n}=\Omega(n)$ to unify the proof. However in the latter case i.e.,  $c_{n}=\Omega(n)$, the proof may go through using the techniques of \cite{rider2006gaussian} without Poincar\'e inequality. Also in (iii)(a), it suffices to take $c_{n}>\log^{2+\epsilon}n$ (see \eqref{eqn: Estimates of moments of delta}) for some $\epsilon>0$. But we assume $c_{n}>\log^{3}n$ for simplicity.

The above conditions implies that $\|M\|\leq \rho$ for some fixed $\rho$ almost surely as $n\to\infty$. We shall discuss this in more details in appendix \ref{sec: norm of the matrix}.

\begin{thm}\label{thm: main theorem}
Let $M$ be one of the $n\times n$ random band matrices of bandwidth $b_{n}$ as defined in the Definition \ref{defn: band matrix with a variance profile} such that condition \ref{con: The condition} holds. Let $f_{1},f_{2},\ldots,f_{k}:\C\to \C$ be analytic on $\D_{\rho+\tau}$ for some $\tau>0$ and bounded elsewhere. 

Then as $n\to\infty$, 

$$\sqrt{\frac{c_{n}}{n}}(\CL_{f_{1}}^{\Delta}(M),\CL_{f_{2}}^{\Delta}(M),\ldots,\CL_{f_{k}}^{\Delta}(M))\stackrel{d}{\to}\CN_{k}(0,\Sigma, \Upsilon),$$ 
where $\Upsilon=0$,

\begin{align*}
\Sigma_{ij}&=-\frac{\nu}{4\pi^{2}}\sum_{k\in \Z}\oint_{\partial \D_{1}
}\oint_{\partial \D_{1}}\frac{f_{i}(z)\overline{f_{j}(\eta)}\hat{w}_{\nu}(k)}{(z\bar{\eta}-\hat{w}_{\nu}(k))^{2}}\;dz\;d\bar{\eta} \\
&\text{if $\nu=\lim_{n\to\infty}(c_{n}/n)\in (0,1]$ and $M$ is periodic},
\end{align*}
and
\begin{align*}
\Sigma_{ij}&=-\frac{1}{4\pi^{2}}\int_{\R}\oint_{\partial \D_{1}
}\oint_{\partial \D_{1}}\frac{f_{i}(z)\overline{f_{j}(\eta)}\hat{w}_{0}(t)}{(z\bar{\eta}-\hat{w}_{0}(t))^{2}}\;dz\;d\bar{\eta}\;dt \\
&\text{if $\lim_{n\to\infty}(c_{n}/n)=0$.}
\end{align*}
Here
\begin{align*}
 \hat{w}_{\nu}(k)&=\int_{-1/2\nu}^{1/2\nu}w_{\nu}(x)e^{2\pi ikx\nu}\;dx=\int_{-1/2}^{1/2}w(x)e^{2\pi i kx\nu}\;dx,\\
 \hat{w}_{0}(t)&=\int_{\R}e^{2\pi itx}w_{0}(x)\;dx=\int_{-1/2}^{1/2}e^{2\pi itx}w(x)\;dx.
\end{align*}
\end{thm}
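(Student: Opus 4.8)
The plan is to reduce the CLT for linear eigenvalue statistics to a CLT for traces of resolvents, and then to handle the resolvent traces by a martingale CLT, treating the variance computation as an entirely separate problem (as the introduction already announces). The first step is the standard contour-integral reduction: since each $f_i$ is analytic on $\D_{\rho+\tau}$ and $\|M\|\le\rho$ almost surely for large $n$ (established in Appendix \ref{sec: norm of the matrix} under Condition \ref{con: The condition}), one has, on the almost-sure event $\{\|M\|\le\rho\}$,
\begin{align*}
\CL_{f_i}^{\Delta}(M)=-\frac{1}{2\pi i}\oint_{\partial\D_{\rho+\tau/2}}f_i(z)\,\tr R_{z}^{\Delta}(M)\;dz,
\end{align*}
so it suffices to prove that $\sqrt{c_n/n}\,\tr R_{z}^{\Delta}(M)$, viewed as a process in $z\in\partial\D_{\rho+\tau/2}$, converges to a Gaussian process, jointly over finitely many contours, and to justify interchanging the limit with the contour integral (tightness/uniform moment bounds in $z$). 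The linearity in the $f_i$'s then immediately gives the joint statement for the vector $(\CL_{f_1}^{\Delta},\dots,\CL_{f_k}^{\Delta})$, with the covariance and pseudo-covariance obtained by plugging the limiting resolvent covariance kernel into the double contour integrals; $\Upsilon=0$ will follow from the assumption $\E[x_{ij}^2]=0$ (resp. $\E[x_{ij}^k]=0$), which kills the relevant non-conjugated second-order terms.

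For the resolvent CLT itself I would use the column-by-column martingale decomposition with the filtration generated by the columns $m_1,\dots,m_n$ of $M$, writing $\tr R_z^{\Delta}(M)-\E[\tr R_z^{\Delta}(M)]=\sum_{k=1}^n(\E_{k}-\E_{k-1})\tr R_z(M)=\sum_k(\E_k-\E_{k-1})\big(\tr R_z(M)-\tr R_z(M^{(k)})\big)$, where $M^{(k)}$ has the $k$th column zeroed out. Each martingale increment is controlled by a rank-one perturbation (Sherman–Morrison–type) identity for the resolvent, expressing $\tr R_z(M)-\tr R_z(M^{(k)})$ in terms of quadratic forms $m_k^* R_z(M^{(k)})^2 e_\cdot$ and the scalar $1+m_k^* R_z(M^{(k)}) m_k$-type denominator. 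The Poincaré inequality assumption in Condition \ref{con: The condition}(ii) (together with the exponential concentration consequence quoted as property \ref{Poincare exponential tail bound}) gives concentration of these quadratic forms around their conditional means $\frac{1}{c_n}\tr(\CI_k R_z(M^{(k)})\cdots)$, and also the moment bounds $\E[|x_{ij}|^k]\le Ck^{2k}$ needed to verify the Lindeberg/Lyapunov condition. One then checks the two hypotheses of the martingale CLT: (a) the conditional Lindeberg condition, from the moment bounds plus the bandwidth lower bound $c_n\ge\log^3 n$ ensuring each increment is $o(1)$ after the $\sqrt{c_n/n}$ scaling; and (b) convergence in probability of the sum of conditional variances $\sum_k \E_{k-1}[|(\E_k-\E_{k-1})\tr R_z(M)|^2]$ to a deterministic limit, which is where the band structure and the variance profile $w_\nu$ enter and where the factor $\nu=\lim(c_n/n)$ and the Fourier coefficients $\hat w_\nu(k)$ appear. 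Establishing (b) requires knowing the deterministic (self-consistent / Dyson-equation) limit of $R_z(M)$ for band matrices with a variance profile — essentially the result behind \cite{cook2018non} — and combining diagonal blocks of the resolvent over the band; I expect this to be the main technical obstacle, since one must show that off-diagonal resolvent entries decay fast enough along the band (a local law / Combes–Thomas-type estimate) for the block sums to concentrate.

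For the variance formula — the stated expressions for $\Sigma_{ij}$ — the strategy is the independent computation promised in the introduction rather than reading it off the martingale bracket. Concretely, I would compute $\Cov(\tr R_z^{\Delta}(M),\overline{\tr R_\eta^{\Delta}(M)})$ in the limit directly from the cumulant/loop-equation expansion (or, equivalently, from the $\nu=1$ Ginibre-type computation of \cite{rider2006gaussian} suitably ``band-deformed''), obtaining a kernel of the form $\partial_z\partial_{\bar\eta}\log\big(\cdot\big)$ built from $z\bar\eta-\hat w_\nu(k)$ summed over the dual lattice $k\in\Z$ (periodic case) or integrated over $t\in\R$ (the $c_n=o(n)$ case, where the discrete Fourier sum becomes a Riemann integral — this is exactly the $\nu\downarrow0$ degeneration the abstract advertises). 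Plugging this kernel into the double contour integral $-\frac{1}{4\pi^2}\oint\oint f_i(z)\overline{f_j(\eta)}\,(\text{kernel})\,dz\,d\bar\eta$ and simplifying $\partial_z\partial_{\bar\eta}$ acting on the logarithm yields the $\hat w_\nu(k)/(z\bar\eta-\hat w_\nu(k))^2$ integrand as written. The consistency check with \cite{rider2006gaussian} at $\nu=1$ (where $w\equiv1$ forces $\hat w_1(0)=1$ and $\hat w_1(k)=0$ for $k\ne0$, collapsing the sum to a single term) is then a short verification, and the final claim $\sigma_f^2(\nu)\to\sigma_f^2(0)$ as $\nu\downarrow0$ is the Riemann-sum convergence $\nu\sum_{k\in\Z}g(k\nu)\to\int_\R g(t)\,dt$ applied term-by-term inside the contour integrals, justified by uniform bounds on the integrand since $|z|=|\eta|=1>\sup_k|\hat w_\nu(k)|$ keeps the denominator bounded away from zero.
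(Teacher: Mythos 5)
Your high-level architecture — contour-integral reduction, column-by-column martingale decomposition, Lindeberg plus conditional-variance verification, and an independent computation of the limiting variance — matches the paper's. But the step you flag as ``the main technical obstacle'' is misidentified, and this is where your route genuinely diverges from the paper's and would, as written, stall.

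You propose to verify the conditional-variance condition (b) by first establishing a deterministic/self-consistent (Dyson-equation) limit of the resolvent together with a local-law / Combes--Thomas-type off-diagonal decay estimate, and then ``combining diagonal blocks of the resolvent over the band.'' The paper does none of this, and with good reason: local laws for non-Hermitian band matrices with variance profiles are substantially heavier machinery than what is needed here. Instead, the paper exploits two more elementary observations. First, the martingale bracket $\frac{c_n}{n}\sum_{k}\E_{k-1}\bigl[S_{z,k}(M)S_{\bar\eta,k}(M^{*})\bigr]$ is reduced, via Sherman--Morrison and a two-term $\log(1+\delta_k)$ expansion, to the explicit quadratic-form quantity $U(z,\bar\eta)=\frac{1}{n}\sum_k T_k(z,\bar\eta)$ with $T_k = e_k^{t}\E_k[\hat R_z(M^{(k)})]\CI_k \E_k[\hat R_\eta(M^{(k)})^{*}]e_k$; concentration of $U$ around its mean is then a direct consequence of the Poincar\'e inequality applied to the entries (with a smooth cutoff $\phi_{\epsilon_n,k}$ to handle the norm indicator), giving $\Var(U)=O(1/c_n)$, with no resolvent pointwise control whatsoever. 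Second, the mean $\E[U(z,\bar\eta)]$ is never computed via a self-consistent equation: by orthogonality of martingale increments, $\E\bigl[\sum_k\E_{k-1}[S_{z,k}S_{\bar\eta,k}]\bigr]=\E[\tr\hat R_z^{\circ}(M)\,\tr\hat R_{\bar\eta}^{\circ}(M^{*})]$, and this is evaluated by expanding the resolvents in powers of $(z\bar\eta)^{-1}$ and computing $\E[\tr M^{l}\tr M^{*l}]$ by a purely combinatorial loop-count in which the band weights assemble into an $l$-fold convolution $w_\nu^{(l)}(0)$, diagonalized by Fourier series (periodic case) or Fourier transform ($c_n=o(n)$). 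So the ``independent variance computation'' you deferred to the end is actually load-bearing \emph{inside} the martingale CLT: it supplies the deterministic limit of the bracket. Your proposed local-law route is not merely a different path; it is a much steeper one and, as stated, incomplete (you have not said how you would actually prove the local law for this model), whereas the paper's Poincar\'e-plus-moments route is self-contained. Minor remark: with the convention $R_z(A)=(zI-A)^{-1}$ used in the paper, $\CL_{f}^{\Delta}(M)=+\frac{1}{2\pi i}\oint f(z)\tr R_z^{\Delta}(M)\,dz$; your minus sign is a slip.
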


 We give the proof in Section \ref{sec: proof of the main theorem} and variance is calculated in Section \ref{sec: variance calculation}. Before going into the proof, we would like to make some remarks about the above theorem. First of all, the theorem is stated for $\CL_{f}^{\Delta}(M)=\CL_{f}(M)-nf(0)$; not for $\CL_{f}^{\circ}(M):=\CL_{f}(M)-\E[\CL_{f}(M)]$. To the best of our knowledge, the circular law for random band matrices is not known for $c_{n}=o(n)$. If the circular law is true for random band matrices, we would asymptotically have
 \begin{align*}
 \lim_{n\to\infty}\frac{1}{n}\E[\CL_{f}(M)]=&\frac{1}{\pi}\int_{\D_{1}}f(z)\;d\Re(z)\;d\Im(z)
 =f(0)
 =\frac{1}{2\pi i}\int_{\partial \D_{\rho+\tau}}\frac{f(z)}{z}\;dz.
 \end{align*}
 
 Then asymptotically we would have $\CL_{f}^{\Delta}(M)\approx\CL_{f}^{\circ}(M)$. 
 
 Secondly, if $M_{\nu}^{\circledcirc}$ is a non-Hermitian full matrix with a continuous variance profile $w(x)$, then $\nu=1$. Limiting ESM of such matrices was discovered in \cite{cook2016limiting}. The following corollary provides a CLT for such matrices.
\begin{cor}
Let $M$ be a non-Hermitian random matrix with a variance profile $w(x)$ as defined in Definition \ref{defn: band matrix with a variance profile} such that condition \ref{con: The condition} holds. Let $f_{1},f_{2},\ldots,f_{k}:\C\to \C$ be analytic on $\D_{1+\tau}$ for some $\tau>0$ and bounded elsewhere. 

Then $(\CL_{f_{1}}^{\Delta}(M),\CL_{f_{2}}^{\Delta}(M),\ldots,\CL_{f_{k}}^{\Delta}(M))\stackrel{d}{\to}\CN_{k}(0,\Sigma, \Upsilon)$ as $n\to\infty$, 

where $\Upsilon=0$,
\begin{align*}
\Sigma_{ij}&=-\frac{1}{4\pi^{2}}\sum_{k\in \Z}\oint_{\partial \D_{1}
}\oint_{\partial \D_{1}}\frac{f_{i}(z)\overline{f_{j}(\eta)}\hat{w}(k)}{(z\bar{\eta}-\hat{w}(k))^{2}}\;dz\;d\bar{\eta},
\end{align*}

\begin{align*}
\hat{w}(k)=\int_{-1/2}^{1/2}w(x)e^{2\pi i kx}\;dx.
\end{align*}
\end{cor}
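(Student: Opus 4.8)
The plan is to deduce this from Theorem \ref{thm: main theorem} by observing that a full non-Hermitian matrix with variance profile $w(x)$ is exactly the special case of a periodic band matrix $M_{\nu}^{\circledcirc}$ with $\nu=1$. Indeed, when $2b_{n}+1=c_{n}=n$ we have $c_{n}/n\to 1$, so $\nu=1$, and the band index set $I_{j}$ is all of $\{1,\dots,n\}$; the periodic wrapping is vacuous because $w$ is supported on $[-1/2,1/2]$ and $(i-j)/c_{n}\in(-1,1)$ already, so $w_{\nu}((i-j)/c_{n})=w((i-j)/c_{n})$ with no copies needed. Hence the hypotheses of Theorem \ref{thm: main theorem} are met verbatim (the analyticity region $\D_{\rho+\tau}$ becomes $\D_{1+\tau}$ since $\rho=1$ in the full-matrix case, which follows from the norm bound discussed in Appendix \ref{sec: norm of the matrix}, where $\|M\|\to 1$ a.s.).

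Next I would simply specialize the $\nu\in(0,1]$ formula for $\Sigma_{ij}$ to $\nu=1$. Setting $\nu=1$ in
\begin{align*}
\Sigma_{ij}=-\frac{\nu}{4\pi^{2}}\sum_{k\in\Z}\oint_{\partial\D_{1}}\oint_{\partial\D_{1}}\frac{f_{i}(z)\overline{f_{j}(\eta)}\,\hat{w}_{\nu}(k)}{(z\bar\eta-\hat{w}_{\nu}(k))^{2}}\,dz\,d\bar\eta
\end{align*}
gives the prefactor $-1/(4\pi^{2})$, and the Fourier coefficients reduce: $\hat{w}_{1}(k)=\int_{-1/2}^{1/2}w(x)e^{2\pi ikx}\,dx=\hat{w}(k)$, which is precisely the quantity named $\hat{w}(k)$ in the corollary. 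Thus $\Sigma_{ij}$ takes the stated form, and $\Upsilon=0$ carries over directly. Since no normalizing factor $\sqrt{c_{n}/n}$ is needed here — it equals $1$ when $c_{n}=n$ — the conclusion is about the unscaled vector $(\CL_{f_{1}}^{\Delta}(M),\dots,\CL_{f_{k}}^{\Delta}(M))$, matching the statement.

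There is essentially no obstacle beyond bookkeeping: the only points that require a word of care are (a) checking that Condition \ref{con: The condition} is consistent with $c_{n}=n$ — it is, via clause (iii)(b) with $\nu=1$, which only demands $\E[x_{ij}^{2}]=0$ and the Poincaré/moment assumptions, all already imposed; and (b) noting that the $\sqrt{c_{n}/n}$ scaling in Theorem \ref{thm: main theorem} is identically $1$ here, so the CLT is stated without it. One may additionally remark, as the paper's abstract and introduction do, that this recovers the Rider–Silverstein result \cite{rider2006gaussian} when furthermore $w\equiv 1$, since then $\hat{w}(k)=\delta_{k,0}$ and only the $k=0$ term survives, collapsing the sum to $-\frac{1}{4\pi^{2}}\oint\oint \frac{f_{i}(z)\overline{f_{j}(\eta)}}{(z\bar\eta-1)^{2}}\,dz\,d\bar\eta$.
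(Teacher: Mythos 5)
Your proof is correct and takes exactly the route the paper intends: the corollary is Theorem~\ref{thm: main theorem} specialized to $\nu=1$, with $\hat{w}_{1}(k)=\int_{-1/2}^{1/2}w(x)e^{2\pi ikx}\,dx=\hat{w}(k)$ read directly from~\eqref{eqn: definition of periodic fourier transform}, the prefactor $\nu/(4\pi^2)$ becoming $1/(4\pi^2)$, and $\sqrt{c_n/n}\to 1$ (so Slutsky removes the scaling). The paper gives no separate proof of this corollary; it is the same specialization.

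One factual slip is worth flagging. Your parenthetical claim that the periodic wrapping is vacuous when $c_n=n$ is incorrect. For $n/2<|i-j|<n$ the fraction $(i-j)/n$ lies in $(1/2,1)\cup(-1,-1/2)$, so the $1$-periodic extension evaluates as $w_{1}\bigl((i-j)/n\bigr)=w\bigl((i-j)/n\mp 1\bigr)$, a point inside $[-1/2,1/2]$ where $w$ is generally nonzero, whereas the unperiodized $w\bigl((i-j)/n\bigr)=0$. Thus the ``corner'' entries of $M_{1}^{\circledcirc}$ are genuinely set by the wrap-around, and $M_{1}^{\circledcirc}$ is not the matrix you get by plugging $(i-j)/n$ into the un-periodized profile. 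This is exactly the point the paper makes immediately after the corollary when it observes that $\nu=1$ does not force a literal full matrix. The slip is harmless to the logic, since the corollary's object is $M_{1}^{\circledcirc}$ as defined, but the phrase ``with no copies needed'' should be dropped.
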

It should be noted that we can also have $\nu=1$ without having a full matrix; for example, by replacing $o(n)$ many off diagonals of a full matrix by zeros. The above corollary along with Theorem \ref{thm: main theorem} asserts that the limiting Gaussian distribution will be unchanged by doing so. In addition to the above corollary, we discuss a few more particular cases.

\begin{enumerate}[(I)]
	\item If we have the full matrix with i.i.d. entries, then $\nu=1$, and $w(x)\equiv1$. In that case, $\hat{w}(k)=\1_{\{k=0\}}$. As a result,

\begin{align*}
\Sigma_{ij}&=-\frac{1}{4\pi^{2}}\oint_{\partial \D_{1}}\oint_{\partial \D_{1}}\frac{f_{i}(z)\overline{f_{j}(\eta)}}{(1-z\bar{\eta})^{2}}\;dz\;d\bar{\eta}\\
&=-\frac{1}{4\pi^{2}}\oint_{\partial \D_{1}}\oint_{\partial \D_{1}}f_{i}(z)\overline{f_{j}(\eta)}\left[\frac{1}{\pi}\int_{\D_{1}}\frac{d\Re(\zeta)\;d\Im(\zeta)}{(\zeta-z)^{2}(\bar{\zeta}-\bar{\eta})^{2}}\right]\;dz\;d\bar{\eta}\\
&=\frac{1}{\pi}\int_{\D_{1}}f_{i}'(\zeta)\overline{f_{j}'(\zeta)}\;d\Re(\zeta)\;d\Im(\zeta).
\end{align*}

The above is the same as the expression obtained in \cite{rider2006gaussian}. In particular, if $f(z)=z^{k}$, then $\Var(\lim_{n\to\infty}\CL_{f}^{\Delta}(M))=k$ for all $k\in \N$. A numerical evidence of this fact is outlined in table \ref{tab: Numerical table}.

\item Let $\nu\in (0,1]$ and $M_{\nu}^{\circledcirc}$ be a periodic band matrix as defined in Definition \ref{defn: band matrix with a variance profile} with variance profile $w(x)\equiv 1$. Consider the monomial test function $f(z)=z^{l}$. Then as a consequence of Theorem \ref{thm: main theorem}, we have $\sqrt{c_{n}/n}\CL_{f}^{\Delta}(M)\stackrel{d}{\to}\CN(0,\CV_{\nu}(f),0)$, where
\begin{align*}
\CV_{\nu}(f)=l\nu \sum_{k\in \Z}\sinc^{l}(k\pi\nu),
\end{align*} 

and $\sinc(t)=\sin(t)/t$. The above equality follows from the fact that
\begin{align*}
\hat{w}_{\nu}(k)&=\int_{-1/2\nu}^{1/2\nu}w_{\nu}(x)e^{2\pi i k x\nu}\;dx
=\int_{-1/2}^{1/2}e^{2\pi i k x\nu}\;dx=\sinc\left(k\pi\nu\right).
\end{align*}

\item Let $f(z)=z^{l}$ and $\nu=0$, then $\sqrt{c_{n}/n}\CL_{f}^{\Delta}(M)\stackrel{d}{\to}\CN(0,\CV_{0}(f),0)$ where
\begin{align}\label{eqn: sinc integration is Irwin-Hall}
\CV_{0}(f)=&\frac{l}{\pi}\int_{\R}\sinc^{l}(t)\;dt
=\frac{l}{(l-1)!}\sum_{i=0}^{\lfloor l/2\rfloor}(-1)^{i}\binom{l}{i}\left(\frac{l}{2}-i\right)^{l-1}.
\end{align}

The above follows from the second part of the Theorem \ref{thm: main theorem} and the fact that 
\begin{align}\label{eqn: w0 is sinc for purely band}
\hat{w}_{0}(t)=\int_{-1/2}^{1/2}e^{2\pi i tx }\;dx=\sinc(\pi t).
\end{align}
The last equality of \eqref{eqn: sinc integration is Irwin-Hall} was obtained by establishing a connection to Irwin-Hall distribution which is outlined in Section \ref{subsec: irwin hall distribuion}. In addition if $l$ is even, one can also write $$\CV_{0}(f)=\frac{l}{(l-1)!}A(l-1, l/2-1),$$ where $A(n,m)$ is an Eulerian number. In combinatorics, $A(n,m)$ counts the number of permutations of the numbers $1,2,\ldots, n$ in which exactly $m$ elements are greater then the previous element.

\item 
We have the following table regarding integrals of the $\sinc(\cdot)$ function \cite{medhurst1965evaluation}.
\begin{align}\label{eqn: sinc integral table}
&\frac{1}{\pi}\int_{\R}\sinc(t)\;dt=1
&&\frac{1}{\pi}\int_{\R}\sinc^{2}(t)\;dt=1\nonumber\\
&\frac{1}{\pi}\int_{\R}\sinc^{3}(t)\;dt=\frac{3}{4}
&&\frac{1}{\pi}\int_{\R}\sinc^{4}(t)\;dt=\frac{2}{3}\nonumber\\
&\frac{1}{\pi}\int_{\R}\sinc^{5}(t)\;dt=\frac{115}{192}
&&\frac{1}{\pi}\int_{\R}\sinc^{6}(t)\;dt=\frac{11}{20}.
\end{align}

From the above equations, we obtain

\begin{align*}
\CV_{0}(z)&=1,\;\;\CV_{0}(z^{2})=2,\;\;\CV_{0}(z^{3})=\frac{9}{4}=2.25\\
\CV_{0}(z^{4})&=\frac{8}{3}\approx 2.67,\;\;\CV_{0}(z^{5})=\frac{575}{192}\approx 2.9948\\
\CV_{0}(z^{6})&=\frac{33}{10}=3.3\\
&\ldots
\end{align*}

Figure \ref{fig: band figures} shows some simulations done in Python. We have taken $n\times n$ periodic random band matrix with i.i.d. complex $N(0,1)$ variables. We run the simulation for test functions $f(z)=z^{l}$ and varying bandwidth.

\begin{center}

\begin{table}[h]
	$n=4000$\\
	\vspace{0.2cm}
\begin{tabular}{|c|c|c|c|c|c|c|}
	\hline
	$f(z)$ & $z$ & $z^{2}$ & $z^{3}$ & $z^{4}$ & $z^{5}$ & $z^{6}$\\ \hline
	$b_{n}=n^{0.3}$ & 1.0018 & 2.0357 & 2.4068 & 2.6585 & 3.0184& 3.3746\\ \hline
	$b_{n}=n/2$ & 1.0767 & 1.8385 & 3.1995 & 3.8756 & 4.6096 & 5.9900\\ \hline
\end{tabular}
\caption{Numerical values are the variances of $\sqrt{c_{n}/n}\CL_{f}^{\Delta}(M)$, calculated from $500$ iterations in each case.}\label{tab: Numerical table}
\end{table}
\end{center}

\begin{figure}[h!]
	\begin{subfigure}{0.3\textwidth}
		\includegraphics[width=\linewidth]{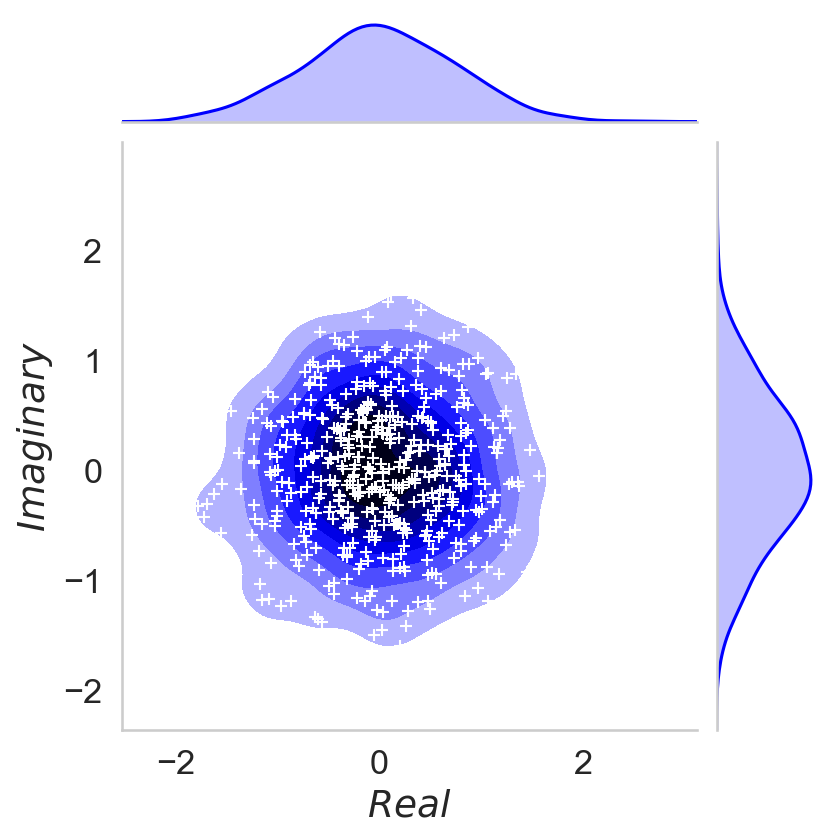}
		\caption{$f(z)=z$}
	\end{subfigure}
	\begin{subfigure}{0.3\textwidth}
	\includegraphics[width=\linewidth]{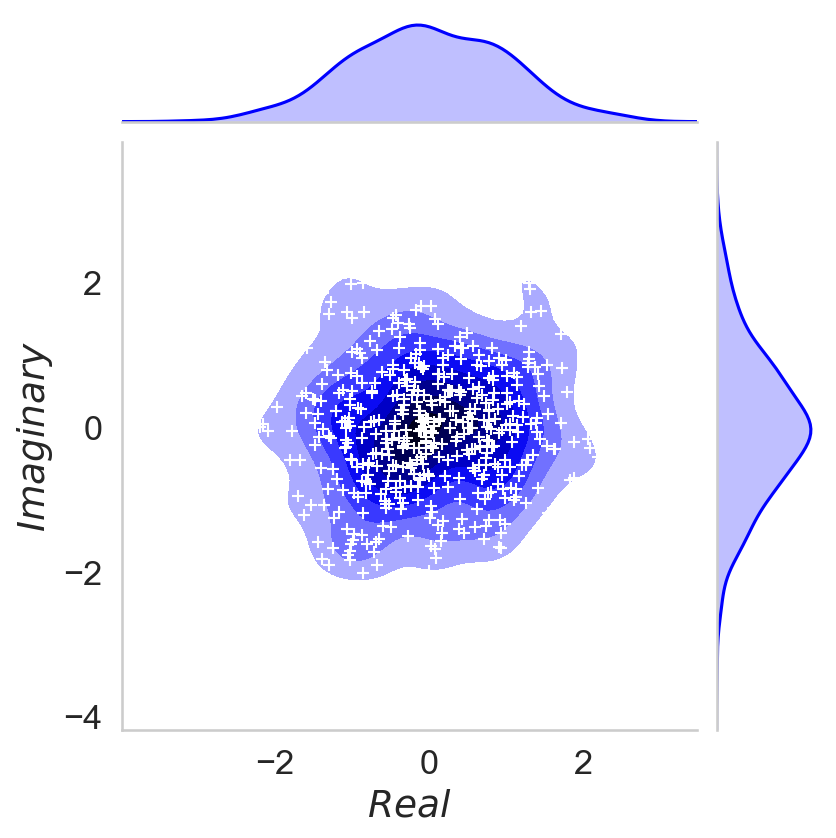}
	\caption{$f(z)=z^{2}$}
\end{subfigure}
	\begin{subfigure}{0.3\textwidth}
	\includegraphics[width=\linewidth]{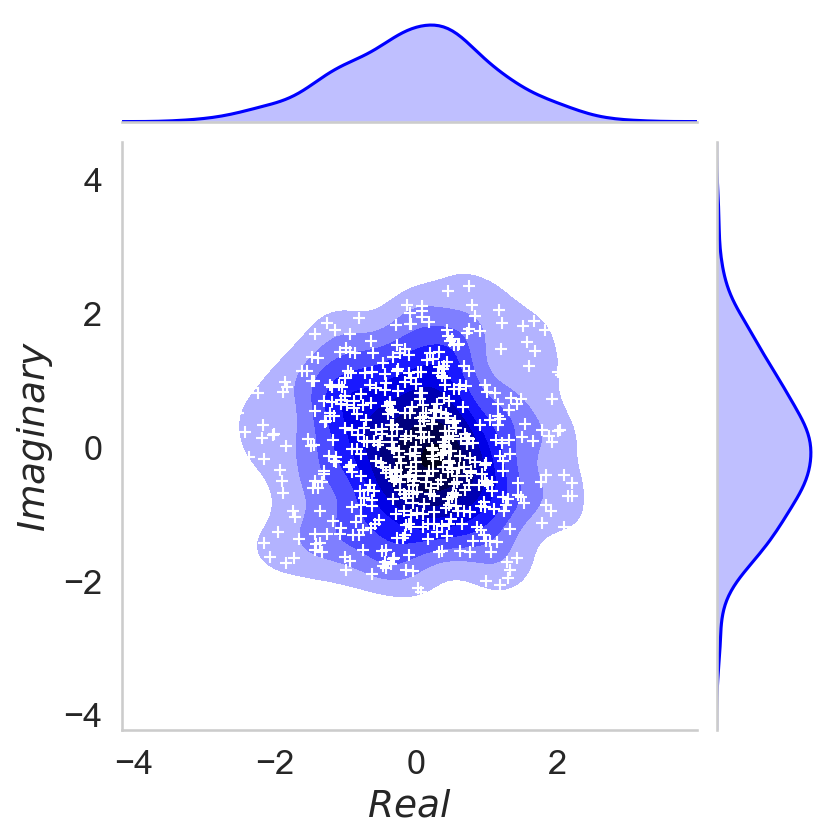}
	\caption{$f(z)=z^{3}$}
\end{subfigure}

	\begin{subfigure}{0.3\textwidth}
		\includegraphics[width=\linewidth]{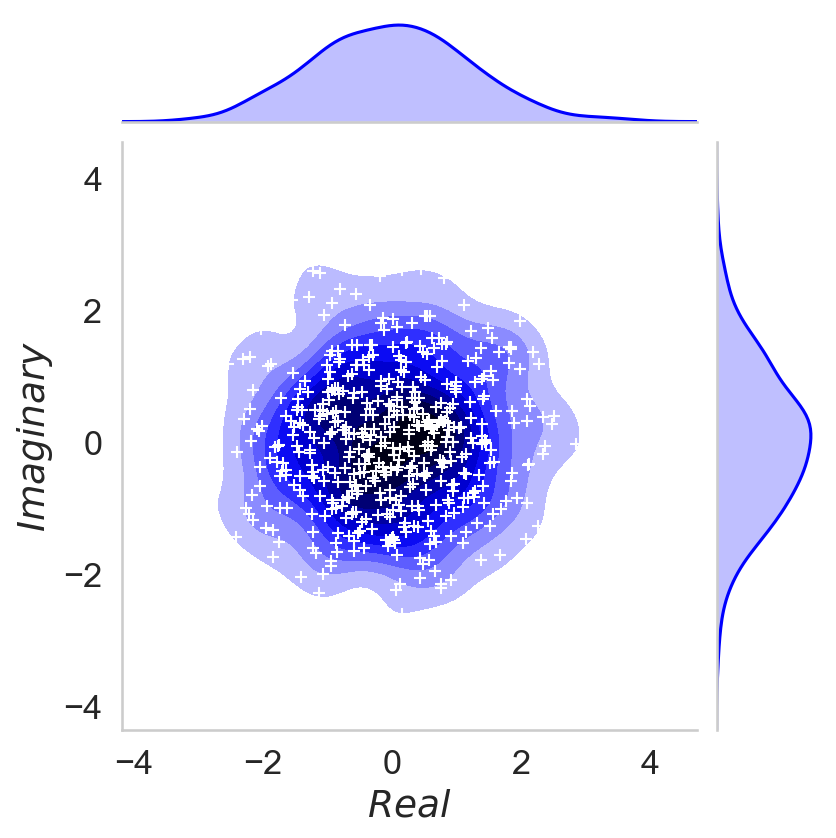}
		\caption{$f(z)=z^{4}$}
	\end{subfigure}
	\begin{subfigure}{0.3\textwidth}
		\includegraphics[width=\linewidth]{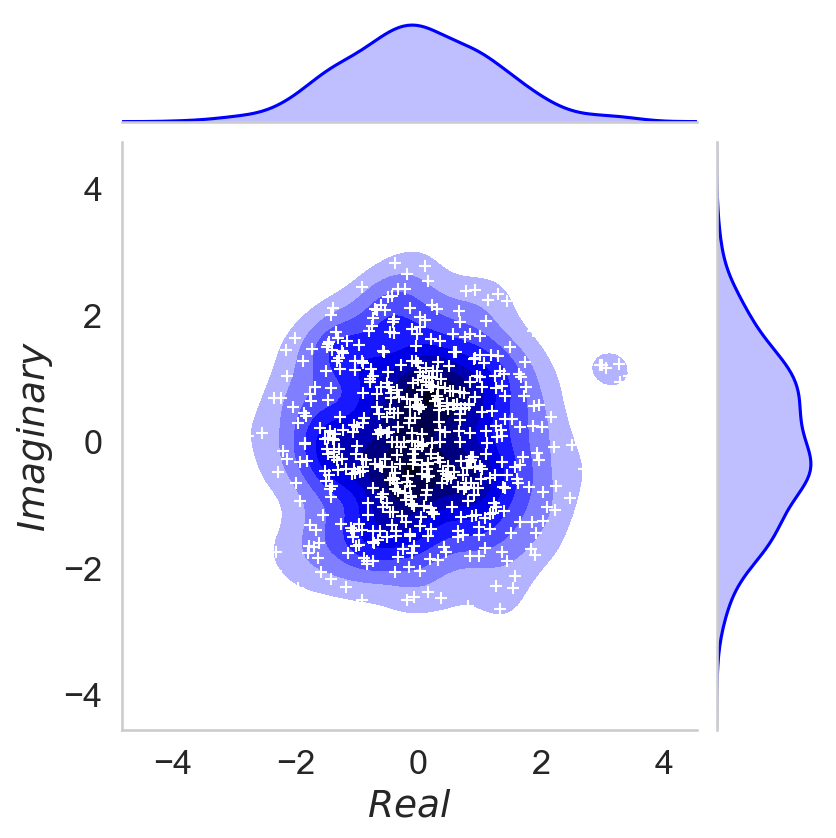}
		\caption{$f(z)=z^{5}$}
	\end{subfigure}
	\begin{subfigure}{0.3\textwidth}
		\includegraphics[width=\linewidth]{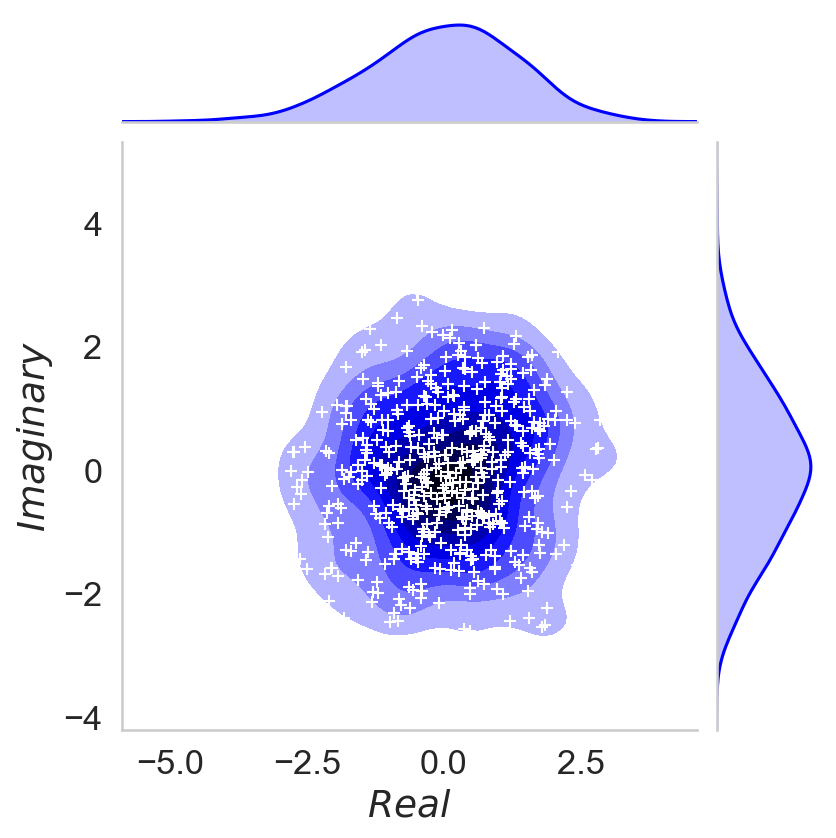}
		\caption{$f(z)=z^{6}$}
	\end{subfigure}

\caption{A heat map of $500$ samples of $\sqrt{c_{n}/n}\CL_{f}^{\Delta}(M)$ and marginal densities of real and imaginary parts are plotted on the complex plane. $M$ is a $4000\times 4000$ random band matrix of bandwidth $b_{n}=n^{0.3}$ with i.i.d. complex Gaussian entries.}\label{fig: band figures}
\end{figure}

\end{enumerate}

\section{Proof of the Theorem \ref{thm: main theorem}}\label{sec: proof of the main theorem}

We adopt the methods based on \cite{rider2006gaussian, shcherbina2015fluctuations}. Let us define the event $$\Omega_{n}:=\{\|M\|\leq \rho\},$$ where $\rho$ is the same as in Lemma \ref{lem: almost sure bound on norm}. From Lemma \ref{lem: almost sure bound on norm} we also have that
\begin{align}\label{eqn: omega complement probablity}
\Pb(\Omega_{n}^{c})\leq K\exp\left(-\sqrt{\frac{\alpha c_{n}}{2\omega}}\frac{3\rho}{4}\right).
\end{align} 
So, if $f:\C\to\C$ is analytic on $\D_{\rho+\tau}$, then on the event $\Omega_{n}$ we may write
\begin{align*}
\CL_{f}(M)&=\frac{1}{2\pi i}\oint_{\partial \D_{\rho+\tau}}\sum_{i=1}^{n}\frac{f(z)}{z-\lambda_{i}(M)}\;dz=\frac{1}{2\pi i}\oint_{\partial \D_{\rho+\tau}}f(z)\tr R_{z}(M)\;dz\\
\text{and}\;\CL_{f}^{\Delta}(M)&=\CL_{f}(M)-nf(0)=\frac{1}{2\pi i}\oint_{\partial\D_{\rho+\tau}}f(z)\tr R_{z}^{\Delta}(M)\;dz,
\end{align*}
where $\tr R_{z}^{\Delta}(M):=\tr R_{z}(M)-\frac{n}{z}$. While the above is true for any such function $f$, the readers may think of $f$ as a linear combination of $f_{1},\ldots,f_{k}$ from theorem \ref{thm: main theorem}. Let us define 
\begin{align*}
\hat{R}_{z}(M):=R_{z}(M)\1_{\Omega_{n}}.
\end{align*}
Now we decompose $\tr R_{z}^{\Delta}(M)$ as
\begin{align*}
\tr R_{z}^{\Delta}(M)=&\tr R_{z}(M)\1_{\Omega_{n}}-\E[\tr R_{z}(M)\1_{\Omega_{n}}]\\
&+\tr R_{z}(M)\1_{\Omega_{n}^{c}}\\
&+\E[\tr R_{z}(M)\1_{\Omega_{n}}]-\frac{n}{z}\\
=&\tr \hat{R}_{z}^{\circ}(M)+\tr R_{z}(M)\1_{\Omega_{n}^{c}}+\E[\tr \hat{R}_{z}(M)]-\frac{n}{z}.
\end{align*}
In the due course, we would like to show that 
\begin{align}
\sup_{z\in \partial\D_{\rho+\tau}}&|\tr R_{z}(M)\1_{\Omega_{n}^{c}}|\stackrel{p}{\to}0\label{eqn: sup of trace of R_z on complement}\\
\sup_{z\in \partial\D_{\rho+\tau}}&\left|\E[\tr \hat{R}_{z}(M)]-\frac{n}{z}\right|\to 0,\label{eqn: difference between trace of R_z and n/z}
\end{align}
and $\hat{R}_{z}^{\circ}(M)$ converges to an appropriate Gaussian process.

\begin{proof}[Proof of \ref{eqn: sup of trace of R_z on complement}]

First of all since the entries of $M$ are continuous random variables, $\Pb(\|M\|=\rho+\tau)=0$. Which implies that $\Pb\left(\sup_{z\in \partial\D_{\rho+\tau}}\left|\tr R_{z}(M)\right|<\infty\right)=1$. Therefore

\begin{align*}
\Pb\left(\sup_{z\in \partial\D_{\rho+\tau}}|\tr R_{z}(M)\1_{\Omega_{n}^{c}}|>\epsilon\right)\leq \Pb(\Omega_{n}^{c})\to 0,\;\text{as}\;n\to\infty.
\end{align*}

\end{proof}

\begin{proof}[Proof of \ref{eqn: difference between trace of R_z and n/z}]
	
To prove \eqref{eqn: difference between trace of R_z and n/z}, we expand $\tr R_{z}(M)$ on $\Omega_{n}$ for $z\in \partial\D_{\rho+\tau}$ as follows
\begin{align}\label{eqn: trace of resolvent expansion}
\tr R_{z}(M)&=\frac{n}{z}+\sum_{l=1}^{\lfloor \log^{2}n\rfloor-1}\frac{1}{z^{l+1}}\tr M^{l}+\frac{1}{z^{\lfloor \ln^{2} n\rfloor+1}}\tr[R_{z}(M)M^{\lfloor \log^{2}n\rfloor}].
\end{align} 
On $\Omega_{n}$, the last term is bounded by $n\tau^{-1}(\rho/|z|)^{\lfloor \log^{2}n\rfloor}$ for all $z\in \partial\D_{\rho+\tau}$.

Now from the condition \ref{con: The condition}(ii, iii) and boundedness of $w(x)$, we shall show that for any $l\geq 3$

\begin{align}\label{eqn: expectation of trace of M^l are zero}
|\E[\tr M^{l}]|&\leq Cn^{-l/6}l^{3l}\nu^{-l/2}=C\left((l^{18}n^{-1}\nu^{-3})^{l/6}\right),
\end{align}
where $C$ is a universal constant.

Under \ref{con: The condition}(iii)(a), it follows that $\E[\tr M^{l}]=\sum \E[m_{i_{1}i_{2}}m_{i_{2}i_{3}}\cdots m_{i_{l}i_{1}}]=0$. Otherwise under condition \ref{con: The condition}(iii)(b), $\E[m_{ij}]=0=\E[m_{ij}^{2}]$. Therefore non-trivial contributions in $\E[\tr M^{l}]$ (in terms of $n$) are obtained when each random variable appears at least three times. There are at most $l^{l}$ different ways to partition $l$ in which each partition size at least three. In each case, $|\{i_{1},i_{2},\ldots,i_{l}\}|\leq n^{l/3}$, and the expectation is bounded by $Cl^{2l}$ (by condition \ref{con: The condition}(ii)). On the other hand, due to normalization by $c_{n}^{-1/2}$, the denominator is $c_{n}^{l/2}=\Omega((\nu n)^{l/2})$. Combining the two, we have the result \eqref{eqn: expectation of trace of M^l are zero}.

Using the equation \eqref{eqn: bound on expectation of norm} along with H\"older's inequality, we have
\begin{align}
&\E[|\tr M^{l}|\1_{\Omega_{n}^{c}}]\leq n\E[\|M\|^{2l}]^{1/2}[\Pb(\Omega_{n}^{c})]^{1/2}\nonumber\\
&\leq n\left[\rho^{l}+\sqrt{K\Gamma(l+1)}\left(\frac{\alpha c_{n}}{2\omega}\right)^{-l/2}\exp\left(\sqrt{\frac{\alpha c_{n}}{2\omega}}\frac{\rho}{8}\right)\right]K\exp\left(-\sqrt{\frac{\alpha c_{n}}{2\omega}}\frac{3\rho}{8}\right)\nonumber\\
&= \left[\rho^{l}\exp\left(-\sqrt{\frac{\alpha c_{n}}{2\omega}}\frac{\rho}{8}\right)+\left(\frac{2\omega (l!)^{1/l}}{\alpha c_{n}}\right)^{l/2}\right]Kn\exp\left(-\sqrt{\frac{\alpha c_{n}}{2\omega}}\frac{\rho}{4}\right)\nonumber\\
&\leq C\left[\rho^{l}+\left(\frac{2\omega (l!)^{1/l}}{\alpha c_{n}}\right)^{l/2}\right]n^{-2}\nonumber\\
&\leq C\left(\rho^{l}+2\right)n^{-2},\label{eqn: expectation of tr M^l on the complement event}
\end{align}
for $l\leq \log^{2}n$ and large enough $n$. Here $C$ is a universal constant.

 In addition, $\E[\tr M]=0=\E[\tr M^{2}]$. As a result taking expectation in \eqref{eqn: trace of resolvent expansion} after multiplying by $\1_{\Omega_{n}}$, and using \eqref{eqn: expectation of tr M^l on the complement event}, \eqref{eqn: expectation of trace of M^l are zero} we have
\begin{align*}
\left|\E[\tr \hat R_{z}(M)]-\frac{n}{z}\right|\leq C_{\tau,\rho}n^{-1/2},
\end{align*} 
where for all $z\in \partial\D_{\rho+\tau}$, the constant $C_{\tau, \rho}$ depends uniformly on $\tau$ and $\rho$. This completes the proof of \eqref{eqn: difference between trace of R_z and n/z}.

\end{proof}

Therefore, asymptotically we may write
%
\begin{align}\label{eqn: Lf(M) interms of the resolvent}
\CL_{f}^{\Delta}(M)&=\frac{1}{2\pi i}\oint_{\partial \D_{\rho+\tau}} f(z)\tr R_{z}^{\Delta}(M)\;dz
=\frac{1}{2\pi i}\oint_{\partial \D_{\rho+\tau}} f(z)\tr \hat{R}_{z}^{\circ}(M)\;dz+o(1).
\end{align}

Thus, proving the Theorem \ref{thm: main theorem} is equivalent to proving the following proposition.

\begin{prop}\label{prop: statement about convergence of resolvent}
The sequence $\{\sqrt{c_{n}/n}\tr \hat{R}_{z}^{\circ}(M)\}_{n}$ is tight in the space of continuous functions on $\partial\D_{\rho+\tau}$, and converges in distribution to a Gaussian process with covariance kernel

\begin{align}\label{stat: statement about convergence of resolvent}
\sigma(z,\eta)=\left\{\begin{array}{ll}\nu\sum_{k\in \Z}\frac{\hat{w}_{\nu}(k)}{(z\bar{\eta}-\hat{w}_{\nu}(k))^{2}}& \\
\text{if $\nu=\lim_{n\to\infty}(c_{n}/n)\in (0,1]$ and $M$ is periodic}& \\\\
\int_{\R}\frac{\hat{w}_{0}(t)}{(z\bar{\eta}-\hat{w}_{0}(t))^{2}}\;dt& \\
\text{if $\lim_{n\to\infty}(c_{n}/n)=0$.}&
\end{array}
\right.
\end{align}

\end{prop}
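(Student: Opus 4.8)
The plan is to establish the convergence of the martingale-difference decomposition of $\sqrt{c_n/n}\,\tr\hat R_z^\circ(M)$ using the column-by-column filtration generated by the random columns $m_1,\dots,m_n$ of $M$. First I would write
\begin{align*}
\tr\hat R_z^\circ(M)=\sum_{k=1}^{n}\bigl(\E_{k-1}-\E_k\bigr)\bigl[\tr\hat R_z(M)\bigr]=:\sum_{k=1}^n\gamma_k(z),
\end{align*}
so that $\{\gamma_k(z)\}_k$ is a sequence of martingale differences with respect to $\{\E_k\}$. The key algebraic input is the resolvent/rank-one perturbation identity: removing the $k$th column (and row) of $M$ changes $\tr R_z$ by a quantity controllable through the Sherman--Morrison formula, so $\gamma_k(z)$ can be expressed, up to negligible error on $\Omega_n$, in terms of quadratic forms $m_k^* R_z(M^{(k)})^2 m_k$ and $m_k^* R_z(M^{(k)}) m_k$ involving the $k$th column only. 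Here I would use the Poincar\'e inequality (Condition \ref{con: The condition}(ii)) together with the a.s.\ norm bound $\|M\|\le\rho$ (Lemma \ref{lem: almost sure bound on norm}) to show concentration of these quadratic forms around their conditional means $\frac{1}{c_n}\tr[\CI_k R_z(M^{(k)})^{(\cdot)}]$, with fluctuations of size $O(c_n^{-1/2})$; this is precisely why the $\sqrt{c_n/n}$ scaling is the right one and why the band structure enters through $\CI_k$.

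Next I would verify the two hypotheses of the martingale CLT (e.g.\ the version in Hall--Heyde / McLeish): a Lindeberg-type negligibility of $\max_k|\gamma_k|$, which follows from the exponential tail bounds coming from the Poincar\'e inequality (property (3) in Section \ref{sec: Preliminaries}) and $\E[|x_{ij}|^k]\le Ck^{2k}$; and convergence in probability of the conditional quadratic variation $\sum_k \E_k[\gamma_k(z)\gamma_k(\eta)]$ (and its conjugate-symmetric analogue $\sum_k\E_k[\gamma_k(z)\overline{\gamma_k(\eta)}]$, needed because the limiting object is complex and one must also track the pseudo-covariance). Evaluating these sums reduces, after the concentration step, to computing $\frac{c_n}{n}\sum_{k=1}^n \frac{1}{c_n^2}\tr[\CI_k R_z(M^{(k)})]\,\overline{\tr[\CI_k R_z(M^{(k)})]}$-type expressions and replacing $R_z(M^{(k)})$ by its deterministic equivalent. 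For a non-Hermitian resolvent $R_z$ with $|z|=\rho+\tau>\|M\|$ this deterministic equivalent is obtained by expanding $R_z(M)=\sum_{l\ge0} z^{-l-1}M^l$ and noting that only terms with a perfect pairing of the $x_{ij}$'s survive in the limit; the surviving combinatorial structure is a sum over ``closed loops'' whose weight is a product of $w_\nu$-values. A discrete Fourier transform in the band index then diagonalizes the relevant circulant-type operator, producing the eigenvalues $\hat w_\nu(k)$ and the geometric sum $\sum_k \hat w_\nu(k)/(z\bar\eta-\hat w_\nu(k))^2$; the $\nu\to0$ case is the same computation with the Fourier series replaced by a Fourier integral $\int_\R \hat w_0(t)/(z\bar\eta-\hat w_0(t))^2\,dt$, justified by a Riemann-sum argument since $c_n=o(n)$ makes the spacing $1/c_n$ of the frequencies shrink while their range grows. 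I would defer the detailed bookkeeping of this step to Section \ref{sec: variance calculation}, as the statement already indicates.

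Finally, for tightness of $\{\sqrt{c_n/n}\,\tr\hat R_z^\circ(M)\}_n$ in $C(\partial\D_{\rho+\tau})$ I would establish a Kolmogorov-type moment bound $\E|\sqrt{c_n/n}(\tr\hat R_z^\circ-\tr\hat R_\eta^\circ)|^{2p}\le C_p|z-\eta|^{2p}$ uniformly in $n$, using that $R_z$ is holomorphic in $z$ on a neighborhood of $\partial\D_{\rho+\tau}$ with derivatives bounded by powers of $\tau^{-1}$ on $\Omega_n$, together with the same Poincar\'e-based moment control of quadratic forms; convergence of finite-dimensional distributions to the Gaussian field with kernel $\sigma(z,\eta)$ follows from the multidimensional martingale CLT applied to arbitrary linear combinations $\sum_j a_j\gamma_k(z_j)$.

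\textbf{Main obstacle.} I expect the hardest part to be the identification of the limiting conditional variance: one must show that the random quantities $\frac{1}{c_n}\tr[\CI_k R_z(M^{(k)})]$ concentrate and, more delicately, that after summing over $k$ and taking the Fourier transform the error terms from (i) replacing $M^{(k)}$ by $M$, (ii) truncating the resolvent expansion, and (iii) passing from the periodic circulant structure to its Fourier diagonalization all vanish. This is where the periodicity hypothesis is genuinely used (cf.\ Remarks \ref{rem: why periodic2}, \ref{rem: why periodic}), since the exact circulant symmetry is what makes the Fourier basis diagonalize $\CI_k$ cleanly; without it one would face a non-constant-coefficient operator and the clean formula would break down.
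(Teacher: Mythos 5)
Your scaffolding is right (column-by-column martingale differences, Sherman--Morrison, Poincar\'e tails, Vitali/Kolmogorov-style tightness), and the final Fourier-mode formula you anticipate is correct, but the middle of the argument---the identification of what the increments actually are and where the variance comes from---is wrong in a way that would derail the computation. You write that $\gamma_k(z)$ is controlled by sesquilinear quadratic forms $m_k^*R_z(M^{(k)})m_k$ and $m_k^*R_z(M^{(k)})^2m_k$ which concentrate around $\tfrac{1}{c_n}\tr[\CI_kR_z(M^{(k)})^{(\cdot)}]$. That is the Hermitian Schur-complement picture and it does not apply here: when only the $k$th \emph{column} is removed, $M=M^{(k)}+m_ke_k^t$ is a rank-one \emph{non-symmetric} perturbation, and Sherman--Morrison produces \emph{bilinear} forms $\delta_k(z)=e_k^t\hat R_z(M^{(k)})m_k$ and $e_k^t\hat R_z(M^{(k)})^2m_k$. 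These have conditional mean zero given $M^{(k)}$ (not a trace), and the entire contribution to the conditional quadratic variation comes from the conditional \emph{second moment}
\[
\CE_k\bigl[\delta_k(z)\,\overline{\delta_k(\eta)}\bigr]
=\frac{1}{c_n}\,e_k^t\,\hat R_z(M^{(k)})\,\CI_k\,\hat R_{\bar\eta}(M^{(k)*})\,e_k,
\]
i.e.\ a single diagonal entry of $R_z\,\CI_k\,R_{\bar\eta}^*$, not the product of traces $\tfrac{1}{c_n^2}\tr[\CI_kR_z]\overline{\tr[\CI_kR_z]}$ you propose. (Note also that if $\gamma_k$ were close to a nonrandom quantity such as $\tfrac{1}{c_n}\tr[\CI_kR_z(M^{(k)})]$, it could not be a martingale difference, since it would not have conditional mean zero---so your description is internally inconsistent.)

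Concretely, the paper writes $\tr\hat R_z(M)-\tr\hat R_z(M^{(k)})=-\partial_z\log\{1+\delta_k(z)\}$, uses the Poincar\'e tail bound \eqref{eqn: Estimates of moments of delta} to show $\delta_k(z)\stackrel{a.s.}{\to}0$, and then the Lindeberg and cross-variance conditions \eqref{eqn: equivalence of condition i of martingale clt}--\eqref{eqn: (requirement) sum of conjugate product of matringale difference converges to the correct variance} follow by expanding the log to second order; the limiting covariance is obtained by first showing $\Var(U(z,\bar\eta))\to 0$ with $U=\tfrac{1}{n}\sum_k T_k$ and $T_k(z,\bar\eta)=e_k^t\E_k[\hat R_z(M^{(k)})]\CI_k\E_k[\hat R_\eta(M^{(k)})^*]e_k$, and then identifying its mean via the moment/loop computation of $\E[\tr M^l\tr M^{*l}]$ and a geometric series in Section~\ref{sec: variance calculation}---not via a direct Fourier diagonalization of a circulant. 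Your high-level Fourier picture explains why the answer looks as it does in the periodic case, but it is not a substitute for this concentration-plus-mean computation; in particular, the periodicity is used at the level of the loop count in \eqref{eqn: periodic convolution} (Remarks~\ref{rem: why periodic2}, \ref{rem: why periodic}), not to diagonalize an operator. To repair your argument you would need to replace the sesquilinear-form concentration step by the conditional-second-moment computation above and rework the quadratic variation accordingly.
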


\begin{rem}\label{rem: eventually it is 1}
Note that since $w$ is continuous and $\int w(x)\;dx=1$, we have $|\hat{w}_{\nu}(k)|, |\hat{w}_{0}(t)|\leq 1$ for all $k\in \Z$, $t\in \R$. Here $\hat{w}$ stands for the Fourier transform of $w$. Therefore $\sigma(z,\eta)$ is well defined for $z,\eta\in \partial\D_{1}$ and analytic on $\D_{1+\epsilon}^{c}$ for any $\epsilon>0$. On the other hand, $f_{i}$s are analytic. Therefore the value of $\oint_{\partial \D_{1+\epsilon}}\oint_{\partial\D_{1+\epsilon}}f_{i}(z)\overline{f_{j}(\eta)}\sigma(z,\eta)\;dz\;d\bar{\eta}$ is same for any $\epsilon>0$. This justifies the integrals in the Theorem \ref{thm: main theorem} are over  $\partial\D_{1}$ instead of  $\partial\D_{\rho+\tau}$.

\end{rem}

Now, we move to the proof of Proposition \ref{prop: statement about convergence of resolvent}. By Cram\'er-Wold device, it suffices to show that for any $\{z_{i},:1\leq i\leq q\}\subset \partial\D_{\rho+\tau}$ and $\{\theta_{i},\beta_{i}:1\leq i\leq q\}\subset \C$ for which 
\begin{align}\label{eqn: Cramer-Wold reduction}
\sqrt{\frac{c_{n}}{n}}\sum_{i=1}^{q}\left[\theta_{i}\tr \hat R_{z_{i}}^{\circ}(M)+\beta_{i}\overline{\tr \hat R_{z_{i}}^{\circ}(M)}\right]\1_{\Omega_{n}}
\end{align}
is real; converges to a Gaussian random variable with mean zero and variance $\sum_{i,j}\theta_{i}\beta_{j}\sigma(z_{i},z_{j})$. In this 
Section, we shall show that it converges to a Gaussian process with mean zero and unknown variance. The exact computation of variance is done in Section \ref{sec: variance calculation}.

We use the martingale difference technique from Lemma \ref{lem: martingale difference CLT} to establish the above. Let $\CE_{k}$ be the averaging with respect to the $k$th column of $M$, and $\E_{k}[\cdot]=\CE_{k+1}\CE_{k+2}\ldots\CE_{n}[\cdot]=\CE_{k+1}\E_{k+1}[\cdot]$. Clearly, $\E_{0}[\cdot]=\E[\cdot]$ and $\E_{n}[\cdot]=[\cdot]$. We write $\tr \hat R_{z}^{\circ}(M)$ as

\begin{align}\label{eqn: trace as martingale difference}
\tr \hat R_{z}^{\circ}(M)=&\sum_{k=1}^{n}(\E_{k}-\E_{k-1})\tr \hat R_{z}(M)
=\sum_{k=1}^{n}\E_{k}\left\{(\tr \hat R_{z}(M))_{k}^{\circ}\right\}
=:\sum_{k=1}^{n}S_{z,k}(M),
\end{align}
where $\xi_{k}^{\circ}=\xi-\CE_{k}[\xi]$. Clearly, $\{S_{z,k}(M)\}_{1\leq k\leq n}$ is a martingale difference sequence with respect to the filtration $\CF_{n,k}=\sigma\{m_{j}:0\leq j\leq k\}$.  Rewrite \eqref{eqn: Cramer-Wold reduction} as

\begin{align*}
\sum_{i=1}^{q}\left[\theta_{i}\tr \hat R_{z_{i}}^{\circ}(M)+\beta_{i}\overline{\tr \hat R_{z_{i}}^{\circ}(M)}\right]=\sum_{k=1}^{n}\left\{\sum_{i=1}^{q}\theta_{i}S_{z_{i},k}(M)+\beta_{i}S_{\bar{z}_{i},k}(M^{*})\right\}=:\sum_{k=1}^{n}\xi_{n,k}.
\end{align*}
Clearly, $\{\xi_{n,k}\}_{1\leq k\leq n}$ is a martingale difference sequence with respect to the filtration $\CF_{n,k}$. Notice that $\E[\cdot|\CF_{n,k-1}]=\E_{k-1}[\cdot]$. Therefore, condition $(i)$ of Lemma \ref{lem: martingale difference CLT} is equivalent to

\begin{align}\label{eqn: equivalence of condition i of martingale clt}
\frac{c_{n}}{n}\sum_{k=1}^{n}\E[\xi_{n,k}^{2}\1_{|\xi_{n,k}|>\delta}]\leq &\frac{c_{n}}{n\delta^{2}}\sum_{k=1}^{n}\E[\xi_{n,k}^{4}]\nonumber\\
\leq &\frac{(2q)^{3}c_{n}}{n\delta^{2}}\sum_{k=1}^{n}\sum_{i=1}^{q}(|\theta_{i}|^{4}+|\beta_{i}|^{4})\E\left[|S_{z,k}(M)|^{4}\right]\to 0.
\end{align}
And the condition $(ii)$ of Lemma \ref{lem: martingale difference CLT} is equivalent to
\begin{align}
&\frac{c_{n}}{n}\sum_{k=1}^{n}\E_{k-1}\left[S_{z_{i},k}(M)S_{z_{j},k}(M)\right]\stackrel{p}{\to}0,\label{eqn: (requirement) sum of product of matringale difference converges to zero}\\
\text{and}\;\;\;&\frac{c_{n}}{n}\sum_{k=1}^{n}\E_{k-1}\left[S_{z_{i},k}(M)S_{\bar{z}_{j},k}(M^{*})\right]\stackrel{p}{\to}\sigma(z_{i},z_{j})\label{eqn: (requirement) sum of conjugate product of matringale difference converges to the correct variance}.
\end{align}
Thus proving the Proposition \ref{prop: statement about convergence of resolvent} is equivalent to showing the tightness of $\{\sqrt{c_{n}/n}\tr \hat{R}_{z}^{\circ}(M)\}_{n}$ and \eqref{eqn: equivalence of condition i of martingale clt}, \eqref{eqn: (requirement) sum of product of matringale difference converges to zero}, \eqref{eqn: (requirement) sum of conjugate product of matringale difference converges to the correct variance}. We do the following general reductions before the proof.

Let $M^{(k)}$ be obtained by setting the $k$th column of $M$ to zero, and $\hat{R}_{z}(M^{(k)}):=R_{z}(M^{(k)})\1_{\Omega_{n,k}}$, where $\Omega_{n,k}=\{\|M^{(k)}\|\leq \rho\}$. The resolvent identity yields $\tr R_{z}(M)=\tr R_{z}(M^{(k)})+e_{k}^{t}R_{z}(M^{(k)})R_{z}(M)m_{k}$. We show that replacing $R_{z}(M^{(k)})\1_{\Omega_{n}}$ by $R_{z}(M^{(k)})\1_{\Omega_{n,k}}$ in \eqref{eqn: trace as martingale difference} is asymptotically same in probability.
\begin{align*}
&\E\left|\sum_{k=1}^{n}\E_{k}\left[\tr R_{z}(M^{(k)})\1_{\Omega_{n}}-\tr R_{z}(M^{(k)})\1_{\Omega_{n,k}}\right]_{k}^{\circ}\right|\\
\leq&2\E\left|\sum_{k=1}^{n}\E_{k}\left[\tr R_{z}(M^{(k)})\1_{\Omega_{n}^{c}\cap\Omega_{n,k}}\right]\right|\\
\leq&2n^{2}\tau^{-1}\Pb(\Omega_{n}^{c})\to 0,\;\;\text{as $n\to\infty$}.
\end{align*} 
Therefore using resolvent identity and Lemma \ref{lem: Sherman-Morrison formula}, we have 
\begin{align*}
\tr \hat R_{z}(M)=&\tr \hat R_{z}(M^{(k)})+e_{k}^{t}\hat R_{z}(M^{(k)})\hat R_{z}(M)m_{k}\\
=&\tr \hat R_{z}(M^{(k)})+\frac{e_{k}^{t}\hat R_{z}(M^{(k)})^{2}m_{k}}{1+e_{k}^{t}\hat R_{z}(M^{(k)})m_{k}}\\
=&\tr \hat R_{z}(M^{(k)})-\frac{\partial}{\partial z}\log \{1+\delta_{k}(z)\},
\end{align*}
where
\begin{align*}
\delta_{k}(z)&:=e_{k}^{t}\hat R_{z}(M^{(k)})m_{k}.
\end{align*}

We notice that $\delta_{k}(z)$ is a product of two independent random variables. Intuitively, conditioned on $\{m_{j}:j\in\{1,2,\ldots, n\}\bs k\}$, $\delta_{k}(z)\to 0$ almost surely. We give the exact estimate below. 

Define $h(m_{k})=e_{k}^{t}\hat R_{z}(M^{(k)})m_{k}=\delta_{k}(z)$. Then using the property \ref{Poincare exponential tail bound} in Definition \ref{defn: poincare inequality} and the fact that $\sum_{j=1}^{n}|\hat R_{z}(M^{(k)})_{ij}|^{2}\leq \|\hat R_{z}(M^{(k)})\|^{2}\leq \tau^{-2}$ for $z\in \partial\D_{\rho+\tau}$, we have

\begin{align}\label{eqn: Estimates of moments of delta}
\Pb\left(|\delta_{k}(z)|>t\;|\;\hat R_{z}(M^{(k)})\right)\leq K\exp\left(-\tau\sqrt{\frac{\alpha c_{n}}{2\omega}}t\right),
\end{align}

where $\omega=\sup_{x}w(x)$. The factor $c_{n}/\omega$ is obtained by using Definition \ref{defn: poincare inequality}(1) and the fact that each entry of $M$ is scaled by the variance profile $w$ and $1/\sqrt{c_{n}}$. Now, taking $t_{n}=c_{n}^{-1/8}$ and using the fact that $c_{n}\geq \log^{3}n$ we have $\sum_{n}\exp\left(-\tau\sqrt{\frac{\alpha c_{n}}{2\omega}}t_{n}\right)<\infty$. Hence by Borel–Cantelli lemma, we have $\delta_{k}(z)\stackrel{a.s.}{\to} 0$. In addition, we also get that
\begin{align*}
\E[|\delta_{k}(z)|^{p}]\leq K p!\left(\frac{2\omega}{\tau^{2}\alpha c_{n}}\right)^{p/2}\;\;\forall\;z\in \partial\D_{\rho+\tau}.
\end{align*}

We notice that 
\begin{align}\label{eqn: M^k is already centered}
[\tr \hat R_{z}(M^{(k)})]_{k}^{\circ}=\tr \hat R_{z}(M^{(k)})-\CE_{k}[\tr \hat R_{z}(M^{(k)})]=0.
\end{align}
Because $M^{(k)}$ is independent of the $k$th column $m_{k}$. 
Consequently,

\begin{align*}
S_{z,k}(M)=& \E_{k}\left\{(\tr R_{z}(M))_{k}^{\circ}\right\}
=-\E_{k}\left[\frac{\partial}{\partial z}(\log\{1+\delta_{k}(z)\})_{k}^{\circ}\right].
\end{align*}

Now we show that the above expectation exists. Let $p_{\delta_{k}}$ be the pdf of $1+\delta_{k}(z)|\hat R_{z}(M^{(k)})$. Since the pdf of each $x_{ij}$ are bounded (condition \ref{con: The condition}(ii)), by Young's convolution inequality, $\|p_{\delta_{k}}\|_{\infty}\leq C\tau\sqrt{c_{n}}$. So  for any $1\leq s<2$ and large enough $n$,
\begin{align*}
\E[|1+\delta_{k}(z)|^{-s}|\hat R_{z}(M^{(k)})]&=\int_{\D_{1/2}}r^{-s}p_{\delta_{k}}(r,\theta)\;r\;dr\;d\theta+\int_{\D_{1/2}^{c}}r^{-s}p_{\delta_{k}}(r,\theta)\;r\;dr\;d\theta\\
&\leq  C\frac{2^{s-2}}{2-s}\tau\sqrt{c_{n}}+K2^{s-1}\\
&\leq C'\frac{2^{s-2}}{2-s}\tau\sqrt{c_{n}},
\end{align*}
where $C'$ is a universal constant. Taking further expectation,
\begin{align}\label{eqn: expectation of (1+delta)^-1 is bounded}
\E[|1+\delta_{k}(z)|^{-s}]\leq C'\frac{2^{s-2}}{2-s}\tau\sqrt{c_{n}},\;\;\forall\;1\leq s<2.
\end{align}
Similarly also using \eqref{eqn: Estimates of moments of delta}, $\E[|\log(1+\delta_{k}(z))|]\leq C'\tau\sqrt{c_{n}}$. Therefore, we have

\begin{align}\label{eqn: Sz interms of log differentiation}
S_{z,k}(M)=& \E_{k}\left\{(\tr \hat R_{z}(M))_{k}^{\circ}\right\}
=-\frac{\partial}{\partial z}\E_{k}[(\log\{1+\delta_{k}(z)\})_{k}^{\circ}],
\end{align}
where the switching between $\E_{k}$ and $\partial/\partial z$ can be justified by the dominated convergence theorem and \eqref{eqn: expectation of (1+delta)^-1 is bounded}. The above technique rewrites $S_{z,k}(M)$ in terms of derivative of an analytic function, which will allow us to estimate $S_{z,k}(M)$ via the following basic fact from complex analysis;

\begin{align}\label{eqn: passing estimates of g to its derivative}
|g'(z)|=\left|\frac{1}{2\pi i}\oint_{z+\partial\D_{\tau/3}}\frac{g(\zeta)\;d\zeta}{(\zeta-z)^{2}}\right|\leq \frac{5}{\pi \tau^{2}}\oint_{z+\partial\D_{\tau/3}}|g(\zeta)|\;d\zeta,
\end{align}

where $g$ is analytic on $z+\D_{\tau/2}$.

\subsection{Proof of tightness} This part is similar to Section 3.2 in \cite{rider2006gaussian}. In this subsection, we show that $\{\sqrt{c_{n}/n}\tr \hat{R}_{z}^{\circ}(M)\}_{n}$ is tight in  $\mathcal{C}(\partial \D_{\rho+\tau})$, the space of continuous functions on $\partial\D_{\rho+\tau}$. In other words, for any $\epsilon>0$ there exists a compact set $\mathcal{K}(\epsilon)\subset \mathcal{C}(\partial \D_{\rho+\tau})$ such that 
\begin{align}\label{eqn: tightness definition}
\Pb\left(\sqrt{\frac{c_{n}}{n}}\tr \hat{R}_{z}^{\circ}(M)\in \mathcal{K}(\epsilon)^{c}\right)<\epsilon.
\end{align}
However, the compact sets in the space of continuous functions are the space of equicontinuous functions. 

Let us define,
\begin{align*}
\Xi_{n}:=\{|\delta_{k}(z)|<c_{n}^{-1/8},\;\;\forall\;1\leq k\leq n\}.
\end{align*}
By \eqref{eqn: Estimates of moments of delta}, and simple union bound
\begin{align*}
\Pb(\Xi_{n}^{c})\leq K\exp\left(\log n-\tau\sqrt{\frac{\alpha}{2\omega}}c_{n}^{3/8}\right)=o(1).
\end{align*}

The last equality follows from the assumption that $c_{n}\geq \log^{3}n$. Consequently,
\begin{align*}
\Pb\left(\sqrt{\frac{c_{n}}{n}}\tr \hat{R}_{z}^{\circ}(M)\in \mathcal{K}(\epsilon)^{c}\right)
&\leq \Pb\left(\sqrt{\frac{c_{n}}{n}}\left|\frac{\tr \hat{R}_{z}^{\circ}(M)-\tr \hat{R}_{z}^{\circ}(M)}{z-\eta}\right|>K\right)\\
&\leq \Pb\left(\sqrt{\frac{c_{n}}{n}}\left|\frac{\tr \hat{R}_{z}^{\circ}(M)-\tr \hat{R}_{z}^{\circ}(M)}{z-\eta}\right|\1_{\Xi_{n}}>K\right)+o(1),
\end{align*}
for any $K>0$. Now, \eqref{eqn: tightness definition} follows from Markov's inequality and the following condition;
\begin{align}\label{eqn: equivalent tightness condition}
\E\left\{\frac{c_{n}}{n}\left|\frac{\tr \hat{R}_{z}^{\circ}(M)-\tr \hat{R}_{z}^{\circ}(M)}{z-\eta}\right|^{2}\1_{\Xi_{n}}\right\}\leq C,
\end{align}
uniformly for all $z,\eta\in \partial\D_{\rho+\tau}$ and $n\in \N$. In what follows, the methods are similar to \cite{rider2006gaussian}. For the sake of completeness, we outline it here.

Using the resolvent identity, we can write
\begin{align*}
\tr \hat R_{z}(M)-\tr \hat R_{\eta}(M)=(\eta -z)\tr [\hat R_{z}(M)\hat R_{\eta}(M)].
\end{align*}
Thus, in the view of \eqref{eqn: trace as martingale difference}
\begin{align}\label{eqn: tightness difference between traces}
(\eta-z)^{-1}\left[\tr \hat R_{z}^{\circ}(M)-\tr \hat R_{\eta}^{\circ}(M)\right]=\sum_{k=1}^{n}\E_{k}\{(\tr [\hat R_{z}(M)\hat R_{\eta}(M)])_{k}^{\circ}\}.
\end{align}

Using resolvent identity and Lemma \ref{lem: Sherman-Morrison formula},
\begin{align*}
&\tr [\hat R_{z}(M)\hat R_{\eta}(M)-\hat R_{z}(M^{(k)})\hat R_{\eta}(M^{(k)})]\\
=&\tr[\{\hat R_{z}(M)-\hat R_{z}(M^{(k)})\}\{\hat R_{\eta}(M)-\hat R_{\eta}(M^{(k)})\}]\\
&+\tr [\hat R_{z}(M^{(k)})\{\hat R_{\eta}(M)-\hat R_{\eta}(M^{(k)})\}]+\tr[\{\hat R_{z}(M)-\hat R_{z}(M^{(k)})\}\hat R_{\eta}(M^{(k)})]\\
=&\tr \frac{\hat R_{z}(M^{(k)})m_{k}e_{k}^{t}\hat R_{z}(M^{(k)})}{1+\delta_{k}(z)}\frac{\hat R_{\eta}(M^{(k)})m_{k}e_{k}^{t}\hat R_{\eta}(M^{(k)})}{1+\delta_{k}(\eta)}\\
&+\tr \frac{\hat R_{z}(M^{(k)})\hat R_{\eta}(M^{(k)})m_{k}e_{k}^{t}\hat R_{\eta}(M^{(k)})}{1+\delta_{k}(\eta)}+\tr \frac{\hat R_{z}(M^{(k)})m_{k}e_{k}^{t}\hat R_{z}(M^{(k)})\hat R_{\eta}(M^{(k)})}{1+\delta_{k}(z)}\\
=&\frac{\left\{e_{k}^{t}\hat R_{z}(M^{(k)})\hat R_{\eta}(M^{(k)})m_{k}\right\}^{2}}{(1+\delta_{k}(z))(1+\delta_{k}(\eta))}+\frac{e_{k}^{t}\hat R_{\eta}(M^{(k)})\hat R_{z}(M^{(k)})\hat R_{\eta}(M^{(k)})m_{k}}{1+\delta_{k}(\eta)}\\
+&\frac{e_{k}^{t}\hat R_{z}(M^{(k)})\hat R_{\eta}(M^{(k)})\hat R_{z}(M^{(k)})m_{k}}{1+\delta_{k}(z)}\\
=:&\Theta_{1}(k)+\Theta_{2}(k)+\Theta_{3}(k).
\end{align*}
In the view of \eqref{eqn: M^k is already centered}, we have $[\tr \hat R_{z}(M^{(k)})\hat R_{\eta}(M^{(k)})]_{k}^{\circ}=0$. Therefore we can rewrite \eqref{eqn: tightness difference between traces} as
\begin{align*}
(\eta-z)^{-1}\left[\tr \hat R_{z}^{\circ}(M)-\tr \hat R_{\eta}^{\circ}(M)\right]=\sum_{k=1}^{n}\E_{k}\left\{\left[\Theta_{1}(k)+\Theta_{2}(k)+\Theta_{3}(k)\right]_{k}^{\circ}\right\}.
\end{align*}
Since $\{[\Theta_{i}(k)]_{k}^{\circ}\}$ is a martingale difference sequence,
\begin{align*}
\E\left|\sum_{k=1}^{n}\E_{k}\left\{\left[\Theta_{1}(k)+\Theta_{2}(k)+\Theta_{3}(k)\right]_{k}^{\circ}\right\}\right|^{2}=\sum_{k=1}^{n}\E\left|\E_{k}\left\{\left[\Theta_{1}(k)+\Theta_{2}(k)+\Theta_{3}(k)\right]_{k}^{\circ}\right\}\right|^{2}.
\end{align*}
Therefore, proving \eqref{eqn: equivalent tightness condition} is equivalent to showing that 
\begin{align}\label{eqn: Theta_i expectations}
c_{n}\E[|\Theta_{i}(k)|^{2}\1_{\Xi_{n}}]\leq C',\;\;\forall\;1\leq k\leq n, \;i=1,2,3,
\end{align}
uniformly for all $n\in \N$ and $z,\eta\in \partial\D_{\rho+\tau}$. However, applying the same method as described in \eqref{eqn: Estimates of moments of delta}, we can get similar tail estimates for $e_{k}^{t}\hat R_{z}(M^{(k)})\hat R_{\eta}(M^{(k)})m_{k}$ etc. (with $\tau^{2}$ or $\tau^{3}$ in the rhs of \eqref{eqn: Estimates of moments of delta}). Now since $|\delta_{k}(z)|<c_{n}^{-1/8}$ on $\Xi_{n}$, using the estimate $|(1+\delta_{k}(z))|^{-1}\leq 2$ in $\E[|\Theta_{i}(k)|^{2}]$s, we have \eqref{eqn: Theta_i expectations}.

\subsection{Proof of \eqref{eqn: equivalence of condition i of martingale clt}} Expanding $\log\{1+\delta_{k}(z)\}$ up to two terms and using \eqref{eqn: Estimates of moments of delta}, \eqref{eqn: Sz interms of log differentiation}, \eqref{eqn: passing estimates of g to its derivative} we have 
\begin{align*}
\E[|S_{z,k}(M)|^{4}]=O(c_{n}^{-2}).
\end{align*}
Substituting the above in \eqref{eqn: equivalence of condition i of martingale clt}, we obtain
\begin{align*}
\sum_{k=1}^{n}\E[\xi_{n,k}^{2}\1_{|\xi_{n,k}|>\delta}]=\delta^{-2}O(c_{n}^{-1}),
\end{align*}
which proves the result.

\subsection{Proof of \eqref{eqn: (requirement) sum of product of matringale difference converges to zero}}
Using condition \ref{con: The condition}(iii), \eqref{eqn: Estimates of moments of delta} and expanding $\log\left\{1+\delta_{k}(z)\right\}$ up to two terms, we see that 
%
\begin{align*}
\CE_{k}\left\{\E_{k}[(\log\{1+\delta_{k}(z)\})_{k}^{\circ}]\E_{k}[(\log\{1+\delta_{k}(\eta)\})_{k}^{\circ}] \right\}=O(c_{n}^{-2}).
\end{align*}
Thus, using \eqref{eqn: Sz interms of log differentiation}, \eqref{eqn: passing estimates of g to its derivative} and the above we have
\begin{align*}
\frac{c_{n}}{n}\sum_{k=1}^{n}\E_{k-1}\left[S_{z_{i},k}(M)S_{z_{j},k}(M)\right]=O(c_{n}^{-1}),
\end{align*}
which proves \eqref{eqn: (requirement) sum of product of matringale difference converges to zero}.

\subsection{Proof of \eqref{eqn: (requirement) sum of conjugate product of matringale difference converges to the correct variance}}
Expanding $\log\{1+\delta_{k}(z)\}$ up to two terms and using \eqref{eqn: Estimates of moments of delta}, \eqref{eqn: Sz interms of log differentiation} we have
\begin{align*}
\E_{k-1}[S_{z,k}(M)S_{\bar{\eta},k}(M^{*})]=&\frac{\partial^{2}}{\partial z\partial\bar{\eta}}D_{k}(z,\bar{\eta}),
\end{align*}
where
\begin{align*}
D_{k}(z,\bar{\eta})
=&\CE_{k}\left\{\E_{k}[(\log\{1+\delta_{k}(z)\})_{k}^{\circ}]\E_{k}[(\log\{1+\overline{\delta_{k}(\eta)}\})_{k}^{\circ}]\right\}\\
=&\frac{1}{c_{n}}e_{k}^{t}\E_{k}[R_{z}(M^{(k)})]\CI_{k}\E_{k}[R_{\bar{\eta}}(M^{(k*)})e_{k}]+O(c_{n}^{-2})\\
=:&\frac{1}{c_{n}}T_{k}(z,\bar{\eta})+O(c_{n}^{-2}).
\end{align*}

As a result, \eqref{eqn: (requirement) sum of conjugate product of matringale difference converges to the correct variance} becomes
\begin{align*}
\frac{c_{n}}{n}\sum_{k=1}^{n}\E_{k-1}[S_{z,k}(M)S_{\bar{\eta},k}(M^{*})]
=&\frac{\partial^{2}}{\partial z\partial\bar{\eta}}\left[\frac{1}{n}\sum_{k=1}^{n}T_{k}(z,\bar{\eta})\right]+O(c_{n}^{-1})\\
=:&\frac{\partial^{2}}{\partial z\partial\bar{\eta}}U(z,\bar{\eta})+O(c_{n}^{-1}).
\end{align*}

Now since $\|\hat{R}_{z}(M^{(k)})\|\leq 2\tau^{-1}$ on $\D_{\rho+\tau/2}^{c}$, $U(z,\bar{\eta})$ is a sequence of uniformly bounded analytic functions on $\D_{\rho+\tau/2}^{c}$. Therefore by Vitali's theorem (eg. \cite[Theorem 5.21]{titchmarsh1939theory}), proving \eqref{eqn: (requirement) sum of conjugate product of matringale difference converges to the correct variance} equivalent to show that $U(z,\bar{\eta})$ converges in probability. 


 Since $\{S_{z,k}(M)\}_{k}$ is a martingale difference sequence, we have $\E[S_{z,k}(M)S_{\bar{\eta},l}(M^{*})]=0$ if $k\neq l$. As a result, 
\begin{align*}
\E\left\{\sum_{k=1}^{n}\E_{k-1}\left[S_{z,k}(M)S_{\bar{\eta},k}(M^{*})\right]\right\}
=&\E\left\{\left(\sum_{k=1}^{n}S_{z,k}(M)\right)\left(\sum_{l=1}^{n}S_{\bar{\eta},l}(M^{*})\right)\right\}\\
=&\E[\tr \hat R_{z}^{\circ}(M)\tr \hat R_{\bar{\eta}}^{\circ}(M^{*})].
\end{align*}
Limit of the above is calculated in Section \ref{sec: variance calculation}. Here we show that $\Var(U(z,\bar{\eta}))\to 0$.

Recall
\begin{align*}
T_{k}(z,\bar{\eta})=e_{k}^{t}\E_{k}\left[\hat R_{z}(M^{(k)})\right]\CI_{k}\E_{k}\left[\hat R_{\eta}(M^{(k)})^{*}\right]e_{k}.
\end{align*}
Let $\epsilon_{n}=2/c_{n}$, and $\phi_{\epsilon_{n}, k}:\R^{2n}\to [0,1]$ be a smooth function such that
\begin{align*}
&\left.\phi_{\epsilon_{n}, k}\right|_{\{\|M^{(k)}\|\leq \rho\}}\equiv 1,\\
& \left.\phi_{\epsilon_{n}, k}\right|_{\{\|M^{(k)}\|\geq \rho+\epsilon_{n}\}}\equiv 0,\\
& \left|\frac{\partial \phi_{\epsilon_{n},k}}{\partial x_{i}}\right|\leq \frac{2}{\epsilon_{n}},\;\;\forall 1\leq i\leq 2n, x\in \R^{2n}.
\end{align*}
Let us define $\tilde{R}_{z}(M^{(k)})=R_{z}(M^{(k)})\phi_{\epsilon_{n}, k}$ and
\begin{align*}
\tilde T_{k}(z,\bar{\eta})=e_{k}^{t}\E_{k}\left[\tilde R_{z}(M^{(k)})\right]\CI_{k}\E_{k}\left[\tilde R_{\eta}(M^{(k)})^{*}\right]e_{k}.
\end{align*}
We note the following estimate
\begin{align*}
|T_{k}(z,\bar{\eta})-\tilde{T}_{k}(z,\bar{\eta})|\leq \frac{20}{(\tau-\epsilon_{n})^{2}}\phi_{\epsilon_{n}, k}\1_{\{\rho<\|M^{(k)}\|<\rho+\epsilon_{n}\}}.
\end{align*}
Using the Lemma \ref{lem: almost sure bound on norm}, we have
\begin{align}\label{eqn: difference between T and tilde T}
\E\left[|T_{k}(z,\bar{\eta})-\tilde{T}_{k}(z,\bar{\eta})|^{2}\right]\leq \frac{K}{(\tau-\epsilon_{n})^{4}}\exp\left(-\sqrt{\frac{\alpha c_{n}}{2\omega}}\frac{3\rho}{4}\right).
\end{align}
For notational simplicity, let us denote 
\begin{align*}
u:=\E_{k}\left[R_{z}(M^{(k)})\right]\sqrt{\CI_{k}},\;\;\;
v:=\sqrt{\CI_{k}}\E_{k}\left[R_{\eta}(M^{(k)})^{*}\right],\\
\tilde u:=\E_{k}\left[\tilde R_{z}(M^{(k)})\right]\sqrt{\CI_{k}},\;\;\;
\tilde v:=\sqrt{\CI_{k}}\E_{k}\left[\tilde R_{\eta}(M^{(k)})^{*}\right],
\end{align*}
where $\sqrt{\CI_{k}}$ denotes the diagonal matrix by taking square root of each entry of the diagonal matrix $\CI_{k}$. Then $\tilde T_{k}(z,\bar{\eta})=\sum_{s\in I_{k}}\tilde u_{ks}\tilde v_{sk}$. Since $x_{ij}$s satisfy Poincar\'e inequality, we have
\begin{align*}
\Var(\tilde T_{k}(z,\bar{\eta}))\leq \frac{1}{\alpha}\sum_{j=1}^{k-1}\sum_{i\in I_{j}}\left[\E \left|\frac{\partial \tilde T_{k}(z,\bar{\eta})}{\partial x_{ij}}\right|^{2}+\E \left|\frac{\partial \tilde T_{k}(z,\bar{\eta})}{\partial \bar x_{ij}}\right|^{2}\right].
\end{align*}

The first sum stops at $k-1$ because $\tilde T_{k}(z,\bar{\eta})$ is constant as a function of $k,k+1,\ldots, n$ columns of $M$. On the other hand, we have

\begin{align*}
&\frac{\partial R_{z}(M^{(k)})_{ks}}{\partial m_{ij}}=R_{z}(M^{(k)})_{ki}R_{z}(M^{(k)})_{js},\;
\frac{\partial \overline{R_{z}(M^{(k)})}_{sk}}{\partial m_{ij}}=0.
\end{align*}
Consequently,
\begin{align*}
&\frac{\partial \tilde u_{ks}}{\partial m_{ij}}=\tilde u_{ki}\tilde u_{js}+u_{ks}\frac{\partial \phi_{\epsilon_{n},k}}{\partial m_{ij}}\1_{\{\rho<\|M^{(k)}\|<\rho+\epsilon_{n}\}},\\
&\frac{\partial \tilde v_{sk}}{\partial m_{ij}}=v_{sk}\frac{\partial \phi_{\epsilon_{n},k}}{\partial m_{ij}}\1_{\{\rho<\|M^{(k)}\|<\rho+\epsilon_{n}\}},\\
&\frac{\partial(\tilde T_{k}(z, \bar{\eta}))}{\partial m_{ij}}=\sum_{s\in I_{k}}\tilde u_{ki}\tilde u_{js}\tilde v_{sk}+\sum_{s\in I_{k}}(u_{ks}\tilde{v}_{sk}+\tilde{u}_{ks}v_{sk})\frac{\partial \phi_{\epsilon_{n},k}}{\partial m_{ij}}\1_{\{\rho<\|M^{(k)}\|<\rho+\epsilon_{n}\}}.
\end{align*}

Denoting $\tilde y_{jk}=\sum_{s\in I_{k}}\tilde u_{js}\tilde v_{sk}$ and using the facts that $\|\tilde u\|,\|\tilde v\|\leq (\tau-\epsilon_{n})^{-1}$, we have
\begin{align*}
&\sum_{j=1}^{k-1}\sum_{i\in I_{j}}\left|\sum_{s\in I_{k}}\tilde u_{ki}\tilde u_{js}\tilde v_{sk}\right|^{2}
=\sum_{j=1}^{k-1}\sum_{i\in I_{j}}|\tilde u_{ki}\tilde y_{jk}|^{2}\leq \|\tilde u_{k}\|_{2}^{2}\|\tilde y_{k}\|_{2}^{2}\leq (\tau-\epsilon_{n})^{-6},\\
&\sum_{j=1}^{k-1}\sum_{i\in I_{j}}\left|\sum_{s\in I_{k}}(u_{ks}\tilde{v}_{sk}+\tilde{u}_{ks}v_{sk})\frac{\partial \phi_{\epsilon_{n},k}}{\partial m_{ij}}\1_{\{\rho<\|M^{(k)}\|<\rho+\epsilon_{n}\}}\right|^{2}\\
&\leq \frac{8}{\epsilon_{n}^{2}}\sum_{j=1}^{k-1}\sum_{i\in I_{j}}\|\tilde{u}_{k}\|_{2}^{2}\|\tilde{v}_{k}\|_{2}^{2}\1_{\{\rho<\|M^{(k)}\|<\rho+\epsilon_{n}\}}\leq 8nc_{n}^{3}(\tau-\epsilon_{n})^{-4}\1_{\{\rho<\|M^{(k)}\|<\rho+\epsilon_{n}\}},
\end{align*}
Using the above estimates, \eqref{eqn: omega complement probablity}, and the fact that $\partial m_{ij}/\partial x_{ij}=O(c_{n}^{-1/2})$ we have,

\begin{align*}
\Var\left(\frac{1}{n}\sum_{k=1}^{n}\tilde T_{k}(z,\bar{\eta})\right)\leq \frac{1}{n}\sum_{k=1}^{n}\Var(\tilde T_{k}(z,\bar{\eta}))\leq \frac{2}{\alpha c_{n}(\tau-\epsilon_{n})^{6}}+\frac{Knc_{n}^{2}}{(\tau-\epsilon_{n})^{4}}\exp\left(-\sqrt{\frac{\alpha c_{n}}{2\omega}}\frac{3\rho}{4}\right).
\end{align*}
No using the estimate \eqref{eqn: difference between T and tilde T} and the assumption $c_{n}>\log^{3}n$, we conclude that

\begin{align*}
&\Var(U(z,\bar{\eta}))=\Var\left(\frac{1}{n}\sum_{k=1}^{n}T_{k}(z,\bar{\eta})\right)\\
&\leq \frac{1}{n}\sum_{k=1}^{n}\E\left[|T_{k}(z,\bar{\eta})-\tilde{T}_{k}(z,\bar{\eta})|^{2}\right]+\Var\left(\frac{1}{n}\sum_{k=1}^{n}\tilde T_{k}(z,\bar{\eta})\right)\\
&\leq \frac{2}{\alpha c_{n}(\tau-\epsilon_{n})^{6}}+\frac{Knc_{n}^{2}}{(\tau-\epsilon_{n})^{4}}\exp\left(-\sqrt{\frac{\alpha c_{n}}{2\omega}}\frac{3\rho}{4}\right)\to 0,\;\text{as $n\to\infty$}.
\end{align*}

\section{Calculation of the variance}\label{sec: variance calculation}

Let us first find the variance for monomial test functions. Let us define $f(z)=z^{l};\;\;l\geq 2$.

\subsection{Case I: $\nu=\lim_{n\to\infty}\frac{c_{n}}{n}\in (0,1]$} The matrix in Definition \ref{defn: band matrix with a variance profile} is periodic.

\begin{align*}
&\lim_{n\to\infty}\frac{c_{n}}{n}\E\left[\CL_{f}(M)\overline{\CL_{f}(M)}\right]
=\lim_{n\to\infty}\frac{c_{n}}{n}\E[\tr M^{l}\tr M^{*l}]\\
=&\lim_{n\to\infty}\frac{c_{n}}{n}\E\left[\left\{\sum_{i_{1},\ldots, i_{l}=1}^{n} m_{i_{1}i_{2}}m_{i_{2}i_{3}}\cdots m_{i_{l}i_{1}}\right\}\left\{\sum_{i_{1},\ldots, i_{l}=1}^{n} \overline m_{j_{1}j_{2}}\overline m_{j_{2}j_{3}}\cdots \overline m_{j_{l}j_{1}}\right\}\right].
\end{align*}

In the above expression, the maximum contribution (in terms of $n$) occurs when all the indices in the loop $i_{1}\to i_{2}\to i_{3}\to\cdots\to i_{l}\to i_{1}$ are distinct and the loop overlaps with the loop $j_{1}\to j_{2}\to j_{3}\to\cdots\to j_{l}\to j_{1}$. The reasoning is similar to \eqref{eqn: expectation of trace of M^l are zero}. Once the indices $i_{1}\to i_{2}\to i_{3}\to\cdots\to i_{l}\to i_{1}$ are fixed, the loop $j_{1}\to j_{2}\to j_{3}\to\cdots\to j_{l}\to j_{1}$ must be same as the loop $i_{1}\to i_{2}\to i_{3}\to\cdots\to i_{l}\to i_{1}$. However, they can overlap in $l$ different ways by rotating $i_{1}\to i_{2}\to i_{3}\to\cdots\to i_{l}\to i_{1}$. 

Now the first index $i_{1}$ can be chosen in $n$ different ways. After that, while choosing the remaining $(l-1)$ many indices, due to the band matrix structure, each index has to be within $\pm b_{n}$ neighborhood of the previous index such that the final index $i_{l}$ is also within $\pm b_{n}$ neighborhood of the first index $i_{1}$ as well.

This last condition imposes an additional constraint which is not present in the full matrix cases. However, these band constraints are completely handled by the weight profile $w_{\nu}$. Therefore using the structures of $m_{ij}$s from Definition \ref{defn: band matrix with a variance profile}, we have

\begin{align}
&\lim_{n\to\infty}\frac{c_{n}}{n}\E\left[\CL_{f}(M)\overline{\CL_{f}(M)}\right]\nonumber\\
=&\lim_{n\to\infty}\frac{c_{n}}{n}\cdot\frac{ln}{c_{n}^{l}}\sum_{1\leq i_{2},i_{3},\ldots,i_{l}\leq n}w_{\nu}\left(\frac{i_{1}-i_{2}}{c_{n}}\right)w_{\nu}\left(\frac{i_{2}-i_{3}}{c_{n}}\right)\cdots w_{\nu}\left(\frac{i_{l}-i_{1}}{c_{n}}\right)\nonumber\\
=&l\lim_{n\to\infty}\frac{1}{c_{n}^{l-1}}\sum_{1\leq i_{2},i_{3},\ldots,i_{l}\leq n}w_{\nu}\left(\frac{i_{1}-i_{2}}{c_{n}}\right)w_{\nu}\left(\frac{i_{2}-i_{3}}{c_{n}}\right)\cdots w_{\nu}\left(\frac{i_{l}-i_{1}}{c_{n}}\right)\nonumber\\
=&l\int_{[0,1/\nu]^{l-1}}w_{\nu}(t_{1}-t_{2})w_{\nu}(t_{2}-t_{3})\cdots w_{\nu}(t_{l}-t_{1})\;dt_{2}\;dt_{3}\;\ldots\;dt_{l}\label{eqn: periodic convolution}\\
=&l w_{\nu}^{(l)}(0)\nonumber\\
=&l\nu\sum_{k\in \Z}\hat{w}_{\nu}(k)^{l}\label{eqn: discrete monomial variance},
\end{align}

where $(l)$ denotes the $l$ fold convolution, and
\begin{align}\label{eqn: definition of periodic fourier transform}
\hat{w}_{\nu}(k)=\int_{-1/2\nu}^{1/2\nu}w_{\nu}(x)e^{2\pi i k x\nu}\;dx=\int_{-1/2}^{1/2}w(x)e^{2\pi i k x\nu}\;dx
\end{align}
is the $k$th Fourier coefficient of $w_{\nu}$. Note that in any $l$ fold convolution, the integral is taken over $l-1$ many variables only (as in \ref{eqn: periodic convolution}). In our context, this can also be explained by the fact that for each chosen index $i_{1}$, we have freedom to choose the indices $i_{2},\ldots,i_{l}$ only. 

\begin{rem}\label{rem: why periodic2}
	The integral \eqref{eqn: periodic convolution} is taken over $[0,1/\nu]^{l-1}$, because $t_{i}\in [0,n/c_{n}]\to [0,1/\nu]$ for all $2\leq i\leq l$. However, the values of the variables $t_{i+1}-t_{i}$ may fall outside $[0,1/\nu]$ in principle. But using the $1/\nu$ periodicity of $w_{\nu}$, we can bring it back to $[0,1/\nu]$. The above calculation does not work for non-periodic band matrices while $c_{n}=\Omega(n)$. 
\end{rem}

\subsection{Case II: $\lim_{n\to\infty}\frac{c_n}{n}=0$}
The band matrix in Definition \ref{defn: band matrix with a variance profile} is periodic. In that case, we compute
\begin{align}
&\lim_{n\to\infty}\frac{c_{n}}{n}\E\left[\CL_{f}(M)\overline{\CL_{f}(M)}\right]\nonumber\\
=&\lim_{n\to\infty}\frac{c_{n}}{n}\E\left[\left\{\sum_{i_{1},\ldots, i_{l}=1}^{n} m_{i_{1}i_{2}}m_{i_{2}i_{3}}\cdots m_{i_{l}i_{1}}\right\}\left\{\sum_{i_{1},\ldots, i_{l}=1}^{n} \overline m_{j_{1}j_{2}}\overline m_{j_{2}j_{3}}\cdots \overline m_{j_{l}j_{1}}\right\}\right]\label{eqn: expansion of M^l first step}\\
=&l\lim_{n\to\infty}\frac{1}{c_{n}^{l-1}}\sum_{1\leq i_{2},i_{3},\ldots,i_{l}\leq n}w_{0}\left(\frac{i_{1}-i_{2}}{c_{n}}\right)w_{0}\left(\frac{i_{2}-i_{3}}{c_{n}}\right)\cdots w_{0}\left(\frac{i_{l}-i_{1}}{c_{n}}\right)\nonumber\\
=&l\int_{\R^{l-1}}w_{0}(t_{1}-t_{2})w_{0}(t_{2}-t_{3})\cdots w_{0}(t_{l}-t_{1})\;dt_{2}\;dt_{2}\;\ldots\;dt_{l}\nonumber\\
=&l\int_{\R^{l-1}}w_{0}(-t_{2})w_{0}(t_{2}-t_{3})\cdots w_{0}(t_{l})\;dt_{2}\;dt_{2}\;\ldots\;dt_{l}\nonumber\\
=&lw_{0}^{(l)}(0)\nonumber\\
=&l\int_{\R}\hat{w}_{0}(t)^{l}\;dt\label{eqn: continuous monomial variance},
\end{align}
where
\begin{align}\label{eqn: definition of non-periodic fourier transform}
\hat{w}_{0}(t)=\int_{\R}e^{2\pi itx}w_{0}(x)\;dx=\int_{-1/2}^{1/2}e^{2\pi itx}w(x)\;dx.
\end{align}

\begin{rem}\label{rem: why periodic}
In the above, if the band matrix was not periodic, we can make the above integration over $\R^{l-1}$ i.e., $l$ fold convolution, by taking $t_{1}$ far off from the origin. We can do that because of $t_{1}\in (0,n/c_{n})\to(0,\infty)$. Thus, the above calculation will also go through for non-periodic band matrices. However, the same approach can not be implemented in \eqref{eqn: periodic convolution}, as in that case $t_{1}\in (0,n/c_{n})\to (0,1/\nu)$. So, we need the matrix to be periodic when $c_{n}=\Omega(n)$.
\end{rem}

\subsection{Covariance kernel of $\tr R_{z}(M)$}\label{subsec: covariance kernel of tr R}

In the view of \eqref{eqn: expectation of trace of M^l are zero}, we notice that if $k\neq l$ then 
\begin{align*}
\frac{c_{n}}{n}\E\left[\tr M^{k}\tr M^{*l}\right]=
O\left((|l-k|^{18}n^{-1}\nu^{-3})^{|k-l|/6}\right).
\end{align*}
If $k=l=1$, then
\begin{align*}
\lim_{n\to\infty}\frac{c_{n}}{n}\E\left[\tr M\tr M^{*}\right]=w_{\nu}(0).
\end{align*}

Therefore using \eqref{eqn: discrete monomial variance}, Lemma \ref{lem: almost sure bound on norm} for $z,\eta>\rho$, and proceeding as \eqref{eqn: trace of resolvent expansion}, \eqref{eqn: expectation of tr M^l on the complement event} on page \pageref{eqn: trace of resolvent expansion} we have,
\begin{align*}
&\lim_{n\to\infty}\frac{c_{n}}{n}\E[\tr \hat R_{z}^{\circ}(M)\tr \hat R_{\bar{\eta}}^{\circ}(M^{*})]\\
=&\lim_{n\to\infty}\frac{c_{n}}{n}\left\{\E[\tr \hat R_{z}(M)\tr \hat R_{\bar{\eta}}(M^{*})]-\frac{n^{2}}{z\bar{\eta}}\right\}\\
=&\lim_{n\to\infty}\frac{c_{n}}{n}\sum_{l=1}^{\infty}(z\bar{\eta})^{-l-1}\E\left[\tr M^{l}\tr M^{*l}\right]\\
=&\frac{w_{\nu}(0)}{(z\bar{\eta})^{2}}+\nu\sum_{l=2}^{\infty}\sum_{k\in \Z}l\frac{\hat{w}_{\nu}(k)^{l}}{(z\bar{\eta})^{l+1}}\\
=&\frac{w_{\nu}(0)}{(z\bar{\eta})^{2}}+\nu\sum_{k\in \Z}\hat{w}_{\nu}(k)(z\bar{\eta})^{-2}\left[\left(1-\frac{\hat{w}_{\nu}(k)}{z\bar{\eta}}\right)^{-2}-1\right]\\
=&\nu\sum_{k\in \Z}\frac{\hat{w}_{\nu}(k)}{(z\bar{\eta}-\hat{w}_{\nu}(k))^{2}},
\end{align*}
where the last expression follows from the fact that $w_{\nu}(0)=\nu\sum_{k\in \Z}\hat{w}_{\nu}(k)$. If we take $\nu\downarrow 0$ in the above expression, we obtain
\begin{align*}
\int_{\R}\frac{\hat{w}_{0}(t)}{(z\bar{\eta}-\hat{w}_{0}(t))^{2}}\;dt.
\end{align*}

Alternatively when $z,\eta>\rho$, using \eqref{eqn: continuous monomial variance},
\begin{align*}
&\lim_{n\to\infty}\frac{c_{n}}{n}\left\{\E[\tr \hat R_{z}(M)\overline{\tr \hat R_{\eta}(M)}]-\frac{n^{2}}{z\bar{\eta}}\right\}\\
=&\sum_{l=1}^{n}(z\bar{\eta})^{-l-1}l\int_{\R}\hat{w}_{0}(t)^{l}\;dt\\
=&\int_{\R}\frac{\hat{w}_{0}(t)}{(z\bar{\eta})^{2}}\left(1-\frac{\hat{w}_{0}(t)}{z\bar{\eta}}\right)^{-2}\;dt\\
=&\int_{\R}\frac{\hat{w}_{0}(t)}{(z\bar{\eta}-\hat{w}_{0}(t))^{2}}\;dt.
\end{align*}

\subsection{Connection to Irwin-Hall distribution \& Eulerian numbers}\label{subsec: irwin hall distribuion} Suppose the weight profile $w(x)\equiv 1$. Then \eqref{eqn: expansion of M^l first step} can be written as $l\gamma_{l}$, where
\begin{align*}
\gamma_{l}:=\Pb(|U_{2}+U_{3}+\cdots+U_{l}|\leq 1/2),
\end{align*}

and $U_{i}\stackrel{i.i.d.}{\sim}\text{Unif}[-1/2,1/2]$. Let $S_{l-1}=\sum_{i=2}^{l}U_{i}$ and  $p_{l-1}(x)$ be the pdf of $S_{l-1}$. Since $S_{l-1}+\frac{l-1}{2}$ follows the Irwin-Hall distribution, the density of $S_{l-1}$ is given by
\begin{align*}
p_{l-1}(x)=\frac{1}{(l-2)!}\sum_{i=0}^{\lfloor x+(l-1)/2\rfloor}(-1)^{i}\binom{l-1}{i}\left(x+\frac{l-1}{2}-i\right)^{l-2},
\;x\in \left[-\frac{l-1}{2},\frac{l-1}{2}\right].
\end{align*}  
For a geometric derivation of the above formula, see \cite{marengo2017geometric}. On the other hand, the characteristic function of $S_{l-1}$ is given by
\begin{align*}
\E[e^{itS_{l-1}}]=\left\{\E[e^{itU_{1}}]\right\}^{l-1}=\left(\sinc(t/2)\right)^{l-1}.
\end{align*}
Using the inversion formula, 
\begin{align*}
p_{l-1}(x)=\frac{1}{2\pi}\int_{\R}e^{-itx}\sinc^{l-1}(t/2)\;dt=\frac{1}{\pi}\int_{\R}e^{-itx}\sinc^{l-1}(t)\;dt.
\end{align*}
Therefore,
\begin{align*}
\gamma_{l}&=\Pb(|S_{l-1}|\leq 1/2)
=\int_{-1/2}^{1/2}p_{l-1}(x)\;dx
=\frac{1}{\pi}\int_{\R}\sinc^{l}(t)\;dt
=p_{l}(0)\\
&=\frac{1}{(l-1)!}\sum_{i=0}^{\lfloor l/2\rfloor}(-1)^{i}\binom{l}{i}\left(\frac{l}{2}-i\right)^{l-1}=\frac{A(l-1, l/2-1)}{(l-1)!},
\end{align*}
where $A(l-1,l/2-1)$ is an Eulerian number for even $l$. $A(n,m)$ counts the number of permutations of $1,2,\ldots,n$ in which exactly $m$ elements are bigger than the previous element. The above establishes \eqref{eqn: sinc integration is Irwin-Hall}.


\appendix
\section{}\label{sec: norm of the matrix}
In this section we discuss about norm of random non-Hermitian matrices. A sharp almost sure bound on the spectral radius of non-Hermitian random matrices can be found in \cite{geman1980limit, geman1986spectral}. 

\begin{thm}\cite{geman1986spectral}\label{thm: sharp as bound for full matrix norm}
Let $M=(m_{ij})_{n\times n}$ be a sequence of $n\times n$ random matrices with $m_{ij};1\leq i,j\leq n$ real valued i.i.d. for each $n$. Assume that for each $n$,
\begin{enumerate}[(i)]
	\item $\E[m_{11}]=0$
	\item $\E[m_{11}^{2}]=\sigma^{2}$
	\item $\E[|m_{11}|^{p}]\leq p^{c p}$ for all $p\geq 2$ and for some $c>0$.
\end{enumerate}
Let 
\begin{align*}
\rho_{n}=\max_{1\leq i\leq n}\{|\lambda_{i}(M/\sqrt{n})|\}.
\end{align*}
Then $\limsup_{n\to\infty}\rho_{n}\leq \sigma$ almost surely.
\end{thm}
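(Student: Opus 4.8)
The natural route is the method of moments together with the Borel--Cantelli lemma, taking the exponent to infinity with $n$. Since $\{\limsup_n \rho_n\le\sigma\}=\bigcap_{k\in\N}\{\limsup_n\rho_n\le\sigma+\tfrac1k\}$, it suffices to show, for each fixed $\epsilon>0$, that $\sum_n\Pb(\rho_n>\sigma+\epsilon)<\infty$. To reach $\rho_n$ I would raise $M$ to a power $m=m_n\to\infty$. One clean reduction is via Schur triangularization $M=U(D+N)U^{*}$, $D=\mathrm{diag}(\lambda_1(M),\dots,\lambda_n(M))$, $N$ strictly upper triangular: then $(D+N)^m=D^m+S$ with $S$ strictly triangular, so $\tr\!\big((M/\sqrt n)^m((M/\sqrt n)^m)^{*}\big)=\tfrac1{n^m}\sum_i|\lambda_i(M)|^{2m}+\tfrac1{n^m}\tr(SS^{*})\ge\rho_n^{2m}$ (this is Weyl's majorization inequality for $M^m$). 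The delicate point is that this crude bound is \emph{not} enough: $\E\big[\tr((M/\sqrt n)^m((M/\sqrt n)^m)^{*})\big]\sim c_m\,n\,\sigma^{2m}$ with $c_m^{1/m}\to 4$ (a Catalan-type factor, reflecting that $M^m$ is very far from normal), so one only obtains $\limsup\rho_n\le 2\sigma$. To reach the sharp constant one must work instead with a quantity in which the \emph{phases} of the eigenvalues cancel, namely $\tr(M^{m})=\sum_i\lambda_i(M)^m$ itself, and (to guard against cancellation among several outliers) a short window $\tr(M^{m}),\tr(M^{m+1}),\tr(M^{m+2})$.

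The core estimate is then
\[
\E\big[(\tr M^{m})^{2}\big]=n^{m}\sigma^{2m}\,(1+o(1))\qquad\text{for a slowly growing }m=m_n\to\infty ,
\]
and this is where the hypotheses enter. Expanding $(\tr M^m)^2=\sum_{P,Q}\prod_{e\in P}m_e\prod_{e\in Q}m_e$ over pairs of closed length-$m$ walks on $\{1,\dots,n\}$, independence and $\E[m_{ij}]=0$ kill every term in which some directed edge occurs only once, so each surviving walk (or the union, if $P,Q$ share edges) is a balanced connected digraph on at most $m/2$ (resp.\ $m$) vertices. The extremal configurations --- the only ones with the full $m$ vertices --- are a vertex-disjoint pair of simple directed cycles of length $m/2$, each traversed twice; crucially, unlike the undirected (tree) case a simple directed cycle admits a \emph{unique} double traversal, so there is no exponential shape factor and these contribute exactly $n^m\sigma^{2m}(1+o(1))$. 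Every other configuration has strictly fewer vertices; there are only polynomially-in-$m$ many of each fixed "excess", and edges of higher multiplicity are controlled by $\E[|m_{11}|^p]\le p^{cp}$, which forces $m_n$ to grow slowly (e.g.\ $m_n=\lfloor\delta\log n\rfloor$ for any fixed $\delta>0$ works, since the error terms are then $e^{-\Omega((\log n)^2)}$). An analogous computation gives the same bound for $\tr(M^{m+1})$ and $\tr(M^{m+2})$.

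To finish, by Markov's inequality $\Pb\big(|\tr(M^{m+j})|>t\big)\le 2\,n^{m+j}\sigma^{2(m+j)}/t^{2}$ for large $n$; while if $\rho_n>\sigma+\epsilon$ and $\lambda_{\mathrm{out}}=Re^{i\theta}$ is an outlier with $R>(\sigma+\epsilon)\sqrt n$, the elementary fact $\max_{0\le j\le 2}|\cos((m+j)\theta)|\ge\tfrac12$ shows (absent other large eigenvalues) that $|\tr(M^{m+j})|\ge\tfrac12 R^{\,m+j}\ge\tfrac12(\sigma+\epsilon)^{m+j}n^{(m+j)/2}$ for some $j\in\{0,1,2\}$. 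Taking $t=\tfrac12(\sigma+\tfrac\epsilon2)^{m+j}n^{(m+j)/2}$ makes $\Pb(\rho_n>\sigma+\epsilon)$ of order $(\sigma/(\sigma+\tfrac\epsilon2))^{2m_n}=n^{-2\delta\log((\sigma+\epsilon/2)/\sigma)}$, which is summable once $\delta=\delta(\epsilon)$ is chosen large enough; Borel--Cantelli and intersecting over $\epsilon=\tfrac1k$ give the theorem. I expect the main obstacle to be twofold: (i) pushing the walk count to the \emph{sharp} leading constant $\sigma^{2m}$ (rather than $(2\sigma)^{2m}$) while controlling the error terms with $\E[|m_{11}|^p]\le p^{cp}$ and keeping $m_n$ large enough to win the Borel--Cantelli step --- this is exactly why that moment hypothesis is indispensable; and, more seriously, (ii) ruling out the scenario in which many outlier eigenvalues conspire to cancel in $\tr(M^{m+j})$ for all $j$ in the window, which needs an a priori (bootstrap) bound on the number of eigenvalues of modulus $>(\sigma+\tfrac\epsilon2)\sqrt n$.
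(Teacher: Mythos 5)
The paper does not prove this theorem --- it is imported from Geman (1986) as a black box --- so there is no in-paper argument to compare your sketch against; I assess it on its own merits.

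Your decision to abandon the direct moment route rests on a misdiagnosis. You claim $\E\bigl[\tr\bigl((M/\sqrt n)^m((M/\sqrt n)^m)^{*}\bigr)\bigr]\sim c_m n\sigma^{2m}$ with $c_m^{1/m}\to 4$, and conclude that this route only gives $\limsup_n\rho_n\le 2\sigma$. That Catalan factor is a feature of the \emph{symmetric} (Wigner) case, where $m_{ij}=m_{ji}$ lets a walk backtrack for free; here $m_{ij}$ and $m_{ji}$ are \emph{independent}, so a backtrack $i\to j\to i$ creates two singleton directed edges that must be re-matched separately, and the Dyck-path/plane-tree enumeration that produces $4^m$ never materialises. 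Expanding $\tr\bigl(M^m(M^m)^{*}\bigr)=\sum_{a,b}|(M^m)_{ab}|^2$, the leading configurations are pairs $\pi=\pi'$ of \emph{self-avoiding directed} paths from $a$ to $b$ --- roughly $n^{m+1}$ of them, each contributing $\sigma^{2m}$ --- while degenerate configurations (coincident vertices, edges of multiplicity $\ge 3$) lose at least one power of $n$ apiece and carry only combinatorial and moment factors that stay $n^{o(1)}$ when $m=m_n\asymp\log n$; controlling those moment factors is precisely the role of hypothesis (iii). Thus $\E[\tr(M^m(M^m)^{*})]\le C_m\,n^{m+1}\sigma^{2m}$ with $C_m=n^{o(1)}$, and the chain $\rho_n^{2m}\le\|(M/\sqrt n)^m\|^2\le\|(M/\sqrt n)^m\|_F^2=\tr\bigl((M/\sqrt n)^m((M/\sqrt n)^m)^{*}\bigr)$ together with Markov already gives $\Pb(\rho_n>(1+\e)\sigma)\le C_m\,n\,(1+\e)^{-2m}$, summable for $m_n=K\log n$ with $K$ large enough. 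This \emph{is} the standard route to the sharp constant $\sigma$ for non-Hermitian i.i.d.\ ensembles, with no phase bookkeeping required.

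The detour via $\tr(M^m)$ therefore solves a harder problem than necessary, and the problem it creates is one you flag but do not repair: $|\tr(M^m)|=\bigl|\sum_i\lambda_i(M)^m\bigr|$ does not dominate $\rho_n^m$, since the $\lambda_i^m$ can cancel. The window $m,m+1,m+2$ handles a single dominant outlier, but several eigenvalues of comparable modulus can make all three traces small at once, and the ``a priori bootstrap'' control on the number of eigenvalues of modulus $>(\sigma+\e/2)\sqrt{n}$ that you invoke at the end is exactly where the full difficulty would reappear. As written the argument does not close; the fix is to revert to the Frobenius-norm estimate, which removes the gap at its source.
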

We see that in our context if a full random matrix satisfies the condition \ref{con: The condition}, then it also satisfies the above condition and as a result, $\|M\|\leq 1$ almost surely as $n\to\infty$. However, the above theorem does not take a variance profile into account. The following theorem from \cite{bandeira2016sharp} estimates the norm of a symmetric random matrix with a variance profile.

\begin{thm}\cite[Corollary 3.5]{bandeira2016sharp}\label{thm: expectation bound for matrix norm}
	Let $X$ be a $n\times n$ real symmetric matrix with $X_{ij}=\xi_{ij}w_{ij}$, where $\{\xi_{ij}:i\geq j\}$ are independent centered random variables and $\{w_{ij}:i\geq j\}$ are give scalars. If $\E[|\xi_{ij}|^{2p}]^{1/2p}\leq C p^{\beta/2}$ for some $C,\beta>0$ and all $p,i,j$, then
	\begin{align*}
	\E\|X\|\leq C'\max_{i}\sqrt{\sum_{j=1}^{n}w_{ij}^{2}}+C'\max_{i,j}|w_{ij}|\log^{(\beta\wedge 1)/2}n,
	\end{align*}
	where $C$ depends on $C,\beta$ only.
\end{thm}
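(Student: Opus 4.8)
The natural route is the moment method: fix an even integer $q=2p$ with $p$ of order $\log n$ to be chosen at the end, and use $\E\|X\|\le\left(\E\,\tr X^{q}\right)^{1/q}$. The first step is to expand the trace,
\[
\E\,\tr X^{q}=\sum_{i_{0},\ldots,i_{q-1}=1}^{n}\E\!\left[X_{i_{0}i_{1}}X_{i_{1}i_{2}}\cdots X_{i_{q-1}i_{0}}\right],
\]
as a sum over closed walks $i_{0}\to i_{1}\to\cdots\to i_{q-1}\to i_{0}$ on $\{1,\ldots,n\}$. Since the $\xi_{ij}$ are independent and centered and $X$ is symmetric, a walk contributes nothing unless every edge $\{i_{k},i_{k+1}\}$ is traversed at least twice; for such a walk the term is at most $\prod_{\{a,b\}}|w_{ab}|^{m_{ab}}\,\E|\xi_{ab}|^{m_{ab}}$, where $m_{ab}\ge2$ is the multiplicity of the edge, $\sum_{\{a,b\}}m_{ab}=q$, and the moment hypothesis gives $\E|\xi_{ab}|^{m}\le(C'm^{\beta/2})^{m}$ (with Cauchy--Schwarz handling odd $m$).

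Next I would group the surviving walks by their \emph{shape} --- the isomorphism type of the underlying multigraph, together with the order in which its edges are first discovered. If a walk visits $t$ distinct vertices and $s$ distinct edges then $s\ge t-1$, and since each edge is used at least twice $s\le q/2=p$. The dominant contribution comes from the boundary case $s=t-1$, which forces the support to be a tree with every edge used exactly twice: the sum of $\prod_{\text{edges}}w_{ab}^{2}$ over tree-like closed walks of length $q$ rooted at a fixed vertex is at most $\mathrm{Cat}_{p}\,\sigma_{*}^{2p}$, where $\sigma_{*}:=\max_{i}\sqrt{\sum_{j}w_{ij}^{2}}$ and $\mathrm{Cat}_{p}\le4^{p}$ is the $p$th Catalan number (the walk is an Euler tour of the tree, so its shape is a Dyck path of length $2p$, and each ``down'' step of the depth-first exploration discovers a new vertex and contributes $\sum_{j}w_{ij}^{2}\le\sigma_{*}^{2}$ once its label is summed out). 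Multiplying by the $n$ choices of root and the moment factor $(\E|\xi|^{2})^{p}\le(C')^{2p}$, the tree-like part of $\E\,\tr X^{q}$ is at most $n\,(2C'\sigma_{*})^{2p}$, whose $q$th root is $n^{1/q}\cdot2C'\sigma_{*}=O(\sigma_{*})$ once $p\asymp\log n$.

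The hard part is to control the walks whose support carries a cycle, i.e. with excess $r:=s-(t-1)\ge1$, and to show they contribute only the correction $C'\sigma\log^{(\beta\wedge1)/2}n$ with $\sigma:=\max_{i,j}|w_{ij}|$. Heuristically each unit of excess ``closes up'' one otherwise-free vertex, trading a factor $\sigma_{*}$ for a factor $\sigma$, but it also relaxes the tree constraints, so one needs a \emph{sharp} count of the closed walks of length $q$ with a prescribed excess and edge-multiplicity profile --- this is exactly the combinatorial heart of \cite{bandeira2016sharp}, carried out there by an explicit encoding of such walks. Granting that count, the excess-$r$ part of $\E\,\tr X^{q}$ is bounded by (number of shapes of excess $r$) $\times$ (moment factor $\le(C'q^{\beta/2})^{q}$) $\times\,\sigma_{*}^{q-2r}\sigma^{2r}$; summing over $r\ge1$ gives a convergent series, and choosing $p$ of order $\log n$ balances the moment growth $(C'q^{\beta/2})^{q}$ against the powers of $\sigma$, producing the $\log^{(\beta\wedge1)/2}n$ factor. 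Taking $q$th roots and adding the two contributions yields $\E\|X\|\le C'\sigma_{*}+C'\sigma\log^{(\beta\wedge1)/2}n$, as claimed. Since in this article the estimate is used only as a black box, one may of course simply invoke \cite[Corollary 3.5]{bandeira2016sharp} instead of reproving it.
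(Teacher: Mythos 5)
The paper does not actually prove this statement: Theorem~\ref{thm: expectation bound for matrix norm} is quoted verbatim (with its attribution) from \cite[Corollary 3.5]{bandeira2016sharp} and used as a black box in the proof of Lemma~\ref{lem: almost sure bound on norm}; there is no internal proof for you to be compared against. Your decision to end the write-up by noting that one may simply invoke the reference is therefore exactly what the paper itself does, and is the appropriate thing to do here.

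As for the sketch itself, it is a reasonable high-level account of the moment method that underlies the cited result: expand $\E\,\tr X^{2p}$ over closed walks, use centering and independence to kill walks with a singly-traversed edge, isolate the tree walks with every edge doubled as the term responsible for $\sigma_{*}=\max_{i}\bigl(\sum_{j}w_{ij}^{2}\bigr)^{1/2}$, attribute the $\max_{i,j}|w_{ij}|\cdot\log^{\cdot}n$ correction to walks with positive excess, and take $p\asymp\log n$. You correctly identify the sharp count of excess walks as the combinatorial crux and defer it to the reference, which is honest and fine given the role this lemma plays in the paper. Two small points of imprecision worth flagging: (1) $s=t-1$ alone only forces the support to be a tree; to conclude that every edge is traversed exactly twice (the Dyck-path picture with $\mathrm{Cat}_{p}$ shapes) you also need $s=p$, i.e.\ the count of distinct edges to saturate its upper bound $q/2$. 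The dominant case is $s=p$ and $t=p+1$ simultaneously; it is cleaner to say so directly. (2) Your heuristic ``moment factor $\le(C'q^{\beta/2})^{q}$ with $p\asymp\log n$'' would most naturally yield a power $\log^{\beta/2}n$, not the exponent $(\beta\wedge 1)/2$ that appears in the statement; matching that exponent requires the finer bookkeeping in \cite{bandeira2016sharp} and is not visible from the coarse estimate you write down, so the claim ``producing the $\log^{(\beta\wedge 1)/2}n$ factor'' is asserted rather than derived. Neither issue affects how this theorem is used in the paper, since it is invoked only as a cited estimate.
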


Using the above theorem along with Poincar\'e inequality, we have the following lemma.
\begin{lem}\label{lem: almost sure bound on norm}
	Let $M$ be an $n\times n$ random matrix as in the Theorem \ref{thm: main theorem}. Then there exists $\rho\geq 1$ such that
	\begin{align}
	\Pb\left(\|M\|>\rho/4+t\right)\leq K\exp\left(-\sqrt{\frac{\alpha c_{n}}{2\omega}}t\right),\;\;\forall\;t>0,\label{eqn: tail bound on norm}
	\end{align}
	where $K>0$ is a universal constant. In particular,
	\begin{align}
	&\Pb(\|M\|>\rho\;\text{infinitely often})=0,\label{eqn: almost sure bound on norm}\\
	\text{and}\;& \;\E[\|M\|^{l}]\leq \rho^{l}+K\Gamma(l+1)\left(\frac{\alpha c_{n}}{2\omega}\right)^{-l/2}\exp\left(\sqrt{\frac{\alpha c_{n}}{2\omega}}\frac{\rho}{4}\right).\label{eqn: bound on expectation of norm}
	\end{align}
\end{lem}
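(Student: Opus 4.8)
The plan is to obtain \eqref{eqn: tail bound on norm} by combining two facts: an a priori bound $\E\|M\|\le\rho/4$ for a suitable $\rho\ge 1$, coming from Theorem \ref{thm: expectation bound for matrix norm} after Hermitization; and an exponential concentration of $\|M\|$ around its mean, coming from the Poincar\'e inequality. Granting both, \eqref{eqn: tail bound on norm} is immediate, since $\Pb(\|M\|>\rho/4+t)\le\Pb(\|M\|-\E\|M\|>t)\le\Pb(|\|M\|-\E\|M\||>t)$, and the last quantity is bounded by the concentration estimate. The bounds \eqref{eqn: almost sure bound on norm} and \eqref{eqn: bound on expectation of norm} are then elementary consequences, as explained below.

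For the mean bound, introduce the $2n\times 2n$ self-adjoint Hermitization $\tilde H=\begin{pmatrix} 0 & M \\ M^{*} & 0 \end{pmatrix}$, which satisfies $\|\tilde H\|=\|M\|$; after realifying the complex entries this is a real symmetric matrix whose entries have the form $\xi_{ij}w_{ij}$ with independent centered $\xi_{ij}$ (the real and imaginary parts of the $x_{ij}$) and scalar weights obeying $|w_{ij}|\le\sqrt{\omega/c_{n}}$ and supported in the band (in the periodic wrap-around model $M_{0}^{\circledcirc}$ at most one of the three summands defining $m_{ij}$ is nonzero once $c_{n}<n$, so this still holds). Condition \ref{con: The condition}(ii) gives $\E[|x_{ij}|^{2p}]^{1/2p}\le Cp^{2}$, so Theorem \ref{thm: expectation bound for matrix norm} applies with $\beta=4$; its two terms are controlled, respectively, by $\max_{i}\sqrt{\sum_{j}w_{ij}^{2}}=\max_{i}\big(c_{n}^{-1}\sum_{j\in I_{i}}w_{\nu}((i-j)/c_{n})\big)^{1/2}$, which stays bounded uniformly in $n$ for all three models (indeed it tends to $\big(\int_{-1/2}^{1/2}w\big)^{1/2}=1$ for interior rows), and by $\max_{ij}|w_{ij}|\log^{1/2}n\le\sqrt{\omega}\,\log^{1/2}n/\sqrt{c_{n}}\to 0$ since $c_{n}\ge\log^{3}n$. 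Hence $\E\|M\|=\E\|\tilde H\|\le C'$ for a universal constant $C'$, and we fix $\rho:=\max\{1,4C'\}$.

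For the concentration, regard $h(x):=\|M(x)\|$ as a function of $x=(x_{ij})\in\C^{n^{2}}$, identified with $\R^{2n^{2}}$. Since
\[
|\,\|M(x)\|-\|M(y)\|\,|\le\|M(x)-M(y)\|\le\|M(x)-M(y)\|_{F}\le\sqrt{\tfrac{\omega}{c_{n}}}\,\|x-y\|_{2},
\]
$h$ is Lipschitz with constant $\sqrt{\omega/c_{n}}$. Because the $x_{ij}$ are independent and each satisfies the Poincar\'e inequality with constant $\alpha$, the vector $x$ does too (by the tensorization property (2) listed after Definition \ref{defn: poincare inequality}), so property \ref{Poincare exponential tail bound} yields $\Pb(|\|M\|-\E\|M\||>t)\le K\exp\!\big(-\sqrt{\alpha}\,t/(\sqrt 2\,\sqrt{\omega/c_{n}})\big)=K\exp\!\big(-\sqrt{\alpha c_{n}/(2\omega)}\,t\big)$, which together with $\E\|M\|\le\rho/4$ gives \eqref{eqn: tail bound on norm}.

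Finally, \eqref{eqn: almost sure bound on norm} follows by setting $t=3\rho/4$ in \eqref{eqn: tail bound on norm} and invoking Borel--Cantelli, using that $\sum_{n}\exp(-\tfrac{3\rho}{4}\sqrt{\alpha c_{n}/(2\omega)})<\infty$ when $c_{n}\ge\log^{3}n$; and \eqref{eqn: bound on expectation of norm} follows from $\E\|M\|^{l}=\int_{0}^{\infty}l s^{l-1}\Pb(\|M\|>s)\,ds\le\rho^{l}+\int_{\rho}^{\infty}l s^{l-1}Ke^{-\beta(s-\rho/4)}\,ds$ with $\beta=\sqrt{\alpha c_{n}/(2\omega)}$ (valid since $s-\rho/4>0$ on $(\rho,\infty)$), the last integral being at most $e^{\beta\rho/4}\int_{0}^{\infty}l s^{l-1}e^{-\beta s}\,ds=\Gamma(l+1)\beta^{-l}e^{\beta\rho/4}$. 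The delicate point is the very first step: staying within the hypotheses of Theorem \ref{thm: expectation bound for matrix norm} (independent, centered, scalar-weighted entries) through the Hermitization and realification of a complex non-Hermitian matrix, and verifying the uniform-in-$n$ boundedness of the weighted row sums for all three matrix models; once this is done, everything else is a routine combination of the two cited estimates.
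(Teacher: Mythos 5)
Your overall strategy matches the paper's: bound $\E\|M\|$ via the Bandeira--van Handel estimate applied to a symmetrized/realified matrix, concentrate $\|M\|$ around its mean via the Lipschitz--Poincar\'e tail bound, then deduce the almost-sure bound by Borel--Cantelli and the moment bound by the layer-cake formula. The Lipschitz bookkeeping (you compute the constant $\sqrt{\omega/c_n}$ in the $x_{ij}$ variables, while the paper equivalently rescales the Poincar\'e constant for $m_{ij}$), the Borel--Cantelli step, and the moment integral are all correct and the same as the paper's.

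There is, however, a gap in the ``realify after Hermitizing'' step. From the complex Hermitian $\tilde H = A + iB$ ($A$ real symmetric, $B$ real antisymmetric), the standard norm-preserving real symmetric realification is $\left(\begin{smallmatrix}A & -B\\ B & A\end{smallmatrix}\right)$, but its two diagonal blocks are identical copies of $A$ (and its off-diagonal blocks are $\pm B$), so the upper-triangle entries are \emph{not} independent: each real part $\Re(m_{ij})$ appears twice above the diagonal, and $\Im(m_{ij})$ appears (up to sign) twice as well. This violates the independence hypothesis on $\{\xi_{ij}:i\ge j\}$ in Theorem \ref{thm: expectation bound for matrix norm} as it is stated in the paper. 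The paper sidesteps this by reversing the order of operations: first split $M = M_R + i M_I$ and use $\|M\|\le\|M_R\|+\|M_I\|$, then Hermitize each of the two \emph{real} $n\times n$ matrices separately into a $2n\times 2n$ real symmetric matrix whose nonzero upper-triangle entries are exactly the independent family $\{\Re(m_{ij})\}$ (respectively $\{\Im(m_{ij})\}$). Your argument goes through verbatim once you adopt this ordering; as written, though, the independence claim about the realified $4n\times 4n$ matrix is false. (Alternatively one could invoke the Hermitian-complex version of the Bandeira--van Handel bound, but the paper's stated Theorem \ref{thm: expectation bound for matrix norm} is the real symmetric version, so the split-then-Hermitize route is the cleaner fix within this framework.)
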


\begin{proof}
First of all, we may write $M=M_{R}+iM_{I}$, where both $M_{R}$ and $M_{I}$ are real valued matrices. Then we can estimate $\|M\|\leq \|M_{R}\|+\|M_{I}\|$. Therefore without loss of generality, let us consider $M$ be a real valued matrix. Consider
\begin{align*}
\tilde{M}:=\left[\begin{array}{cc}
O & M\\
M^{*} & O
\end{array}
\right],
\end{align*}
and apply theorem \ref{thm: expectation bound for matrix norm} on $\tilde M$ to obtain the same bound for $\E\|\tilde{M}\|(=\E\|M\|)$. In our case, $w_{ij}^{2}=\frac{1}{c_{n}}w_{\nu}((i-j)/n)$ or $w_{ij}^{2}=\frac{1}{c_{n}}w_{0}((i-j)/n)$ as described in Definition \ref{defn: band matrix with a variance profile}. Since $c_{n}\geq \log^{3}n$ and $w$ is a piece-wise continuous function, $\lim_{n\to\infty}\sum_{j=1}^{n}w_{ij}^{2}=\int w(x)\;dx=1$  and $\max_{i,j}|w_{ij}|\log^{(\beta\wedge 1)/2}n\to 0$. As a result, there exists $\rho\geq 1$ such that $$\limsup\E\|M\|\leq \rho/4.$$ Here we note that we need $c_{n}$ to grow at least as $\log n$. In fact, this is a sharp condition. Otherwise, the matrix norm may be unbounded \cite{bogachev1991level, khorunzhy2004spectral}.

Secondly, $\|M\|\leq \sqrt{\sum_{i,j}|m_{ij}|^{2}}$ implies that $h(M):= \|M\|$ is a Lipschitz$_{1}$ function. Therefore applying the properties of Poincar\'e inequality as described in Definition \ref{defn: poincare inequality}, we have
\begin{align*}
&\Pb\left(|\|M\|-\E\|M\||>t\right)\leq K\exp\left(-\sqrt{\frac{\alpha c_{n}}{2\omega}}t\right)\\
\text{i.e.,} \;&\;\Pb\left(\|M\|>\rho/4+t\right)\leq K\exp\left(-\sqrt{\frac{\alpha c_{n}}{2\omega}}t\right),
\end{align*}
where $\omega=\sup_{x}w(x)$.  

Equation \eqref{eqn: almost sure bound on norm} can be seen from the equation \eqref{eqn: tail bound on norm} along with the application of Borel-Cantelli lemma with $t=3\rho/4$. Equation \eqref{eqn: bound on expectation of norm} can be justified as follows,
\begin{align*}
\E[\|M\|^{l}]&=\int_{0}^{\rho/4}lu^{l-1}\Pb(\|M\|>u)\;du+\int_{\rho/4}^{\infty}lu^{l-1}\Pb(\|M\|>u)\;du\\
&\leq \rho^{l}+Kl\exp\left(\sqrt{\frac{\alpha c_{n}}{2\omega}}\frac{\rho}{4}\right)\int_{\rho/4}^{\infty}u^{l-1}\exp\left(-\sqrt{\frac{\alpha c_{n}}{2\omega}}u\right)\;du\\
&\leq \rho^{l}+K\Gamma(l+1)\left(\frac{\alpha c_{n}}{2\omega}\right)^{-l/2}\exp\left(\sqrt{\frac{\alpha c_{n}}{2\omega}}\frac{\rho}{4}\right).
\end{align*}


 
 
 We would like to remark that the lemma is also true for column removed matrices $M^{(k)}$, and the same proof will go through.
\end{proof}

We finally would like to remark that although the Theorem \ref{thm: expectation bound for matrix norm} gives a constant bound on the norm of the matrix with a variance profile, the constant is not that sharp unlike Theorem \ref{thm: sharp as bound for full matrix norm}. However, we expect that for matrices with continuous variance profile, the correct norm bound should be $\limsup_{n\to\infty}\sum_{i=1}^{n}w_{ij}^{2}$. This was remarked in \cite[Remark 4.11]{latala2018dimension}. In our case, this limit is equal to $1$. As we have mentioned in remark \ref{rem: eventually it is 1} that eventually it suffices to take $z,\eta\in \partial \D_{1}$ only.

\section{}

Here we list down the two key ingredients; martingale difference CLT, and Sherman-Morrison formula. Interested readers may find the proofs in the included references.

\begin{lem}\cite[Theorem 35.12]{billingsley2008probability}\label{lem: martingale difference CLT}
Let $\{\xi_{n,k}\}_{1\leq k\leq n}$ be a martingale difference array with respect to a filtration $\{\CF_{k,n}\}_{1\leq k\leq n}$. Suppose for any $\delta>0$,
\begin{enumerate}[(i)]
	\item $\lim_{n\to\infty}\sum_{k=1}^{n}\E[\xi_{n,k}^{2}\1_{|\xi_{n,k}|>\delta}]=0$,
	\item $\sum_{k=1}^{n}\E[\xi_{n,k}^{2}|\CF_{n,k-1}]\stackrel{p}{\to}\sigma^{2}$ as $n\to\infty$.
\end{enumerate}
Then $\sum_{k=1}^{n}\xi_{n,k}\stackrel{d}{\to}\CN_{1}(0,\sigma^{2})$.
\end{lem}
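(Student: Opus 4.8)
The statement is the classical martingale‑difference CLT, cited here from \cite{billingsley2008probability}; I outline how I would reconstruct its proof. The plan is to prove pointwise convergence of characteristic functions, $\E\bigl[\exp(it\sum_{k=1}^{n}\xi_{n,k})\bigr]\to\exp(-\sigma^{2}t^{2}/2)$ for every $t\in\R$, and then invoke L\'evy's continuity theorem. The work splits into three stages — truncation, localization, and an exponential‑martingale identity — and the subtle point is the ordering of the limits in $n$ and in the truncation level $\delta$.

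First I would truncate. Fix $\delta\in(0,1]$ and set $\tilde\xi_{n,k}:=\xi_{n,k}\1_{|\xi_{n,k}|\le\delta}-\E[\xi_{n,k}\1_{|\xi_{n,k}|\le\delta}\,|\,\CF_{n,k-1}]$; since $\E[\xi_{n,k}\,|\,\CF_{n,k-1}]=0$ the centering term equals $-\E[\xi_{n,k}\1_{|\xi_{n,k}|>\delta}\,|\,\CF_{n,k-1}]$, so $\xi_{n,k}-\tilde\xi_{n,k}=\xi_{n,k}\1_{|\xi_{n,k}|>\delta}-\E[\xi_{n,k}\1_{|\xi_{n,k}|>\delta}\,|\,\CF_{n,k-1}]$. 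Then $\{\tilde\xi_{n,k}\}$ is still a martingale difference array with $|\tilde\xi_{n,k}|\le 2\delta$, and the elementary bound $|x|\le x^{2}/\delta$ on $\{|x|>\delta\}$ together with hypothesis (i) gives $\E\bigl|\sum_{k}(\xi_{n,k}-\tilde\xi_{n,k})\bigr|\le\frac{2}{\delta}\sum_{k}\E[\xi_{n,k}^{2}\1_{|\xi_{n,k}|>\delta}]\to 0$, so it is enough to treat $\tilde S_{n}:=\sum_{k}\tilde\xi_{n,k}$. The same estimates show $\tilde\sigma_{n,k}^{2}:=\E[\tilde\xi_{n,k}^{2}\,|\,\CF_{n,k-1}]$ still satisfies $\sum_{k}\tilde\sigma_{n,k}^{2}\stackrel{p}{\to}\sigma^{2}$, its $L^{1}$-distance from $\sum_{k}\E[\xi_{n,k}^{2}\,|\,\CF_{n,k-1}]$ being controlled by $\sum_{k}\E[\xi_{n,k}^{2}\1_{|\xi_{n,k}|>\delta}]$ and $\delta\sum_{k}\E\bigl|\E[\xi_{n,k}\1_{|\xi_{n,k}|>\delta}\,|\,\CF_{n,k-1}]\bigr|$.

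Next I would localize, because (ii) gives only convergence in probability and $\sum_{k}\tilde\sigma_{n,k}^{2}$ need not be uniformly bounded, which would obstruct the exponential estimate. Introduce the stopping index $\tau_{n}:=\min\{k:\sum_{j\le k}\tilde\sigma_{n,j}^{2}>\sigma^{2}+1\}$ (and $\tau_{n}:=n$ otherwise) and set $\hat\xi_{n,k}:=\tilde\xi_{n,k}\1_{\{k\le\tau_{n}\}}$; since $\{k\le\tau_{n}\}\in\CF_{n,k-1}$ this is again a martingale difference array, with $\hat V_{n}:=\sum_{k}\E[\hat\xi_{n,k}^{2}\,|\,\CF_{n,k-1}]\le\sigma^{2}+1+4\delta^{2}$ deterministically, while $\Pb(\tau_{n}<n)\to 0$, so $\hat S_{n}:=\sum_{k}\hat\xi_{n,k}$ differs from $\tilde S_{n}$ only on a negligible event and $\hat V_{n}\stackrel{p}{\to}\sigma^{2}$. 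Then, for fixed $t$, I would run the exponential‑martingale argument on $W_{n,k}:=\exp\bigl(it\sum_{j\le k}\hat\xi_{n,j}+\frac{t^{2}}{2}\sum_{j\le k}\hat\sigma_{n,j}^{2}\bigr)$, which the localization makes uniformly bounded: writing $W_{n,n}=1+\sum_{k}(W_{n,k}-W_{n,k-1})$, conditioning on $\CF_{n,k-1}$, and Taylor‑expanding $\E[e^{it\hat\xi_{n,k}}\,|\,\CF_{n,k-1}]$ and $e^{-t^{2}\hat\sigma_{n,k}^{2}/2}$ to second order with remainders $\le C(t)\delta\,\hat\sigma_{n,k}^{2}$ (using $|\hat\xi_{n,k}|\le 2\delta$), one gets $|\E[W_{n,n}]-1|\le C(t)\delta(\sigma^{2}+1+4\delta^{2})$ uniformly in $n$. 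Since $e^{it\hat S_{n}}=W_{n,n}e^{-t^{2}\hat V_{n}/2}$ with $W_{n,n}$ bounded and $\hat V_{n}\stackrel{p}{\to}\sigma^{2}$, bounded convergence gives $\E[e^{it\hat S_{n}}]=e^{-t^{2}\sigma^{2}/2}\E[W_{n,n}]+o(1)$; combining with the negligibility of $S_{n}-\tilde S_{n}$ (in $L^{1}$, for fixed $\delta$) and of $\tilde S_{n}-\hat S_{n}$ (in probability) yields $\limsup_{n}\bigl|\E[e^{itS_{n}}]-e^{-t^{2}\sigma^{2}/2}\bigr|\le C(t)\delta$, and since $\delta\in(0,1]$ is arbitrary the characteristic function converges as required.

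The main obstacle is not any single estimate but the interplay of the two parameters: the bound $|\E[W_{n,n}]-1|\le C(t)\delta$ does not improve with $n$, so one cannot let $\delta\downarrow0$ prematurely — the argument must hold $\delta$ fixed, send $n\to\infty$ to annihilate both the truncation and the localization errors, and only afterwards take $\delta\downarrow0$. Arranging the localization so that it is simultaneously predictable (i.e.\ $\CF_{n,k-1}$-measurable at step $k$), renders $\hat V_{n}$ deterministically bounded, and is asymptotically invisible, is the technical crux; the rest is routine Taylor expansion and conditional‑expectation bookkeeping.
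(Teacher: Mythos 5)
The paper does not prove this lemma; it is invoked as a black box from \cite[Theorem~35.12]{billingsley2008probability}, so there is no ``paper's proof'' to compare against. Your reconstruction is, as far as I can see, a correct sketch of the standard Brown--McLeish argument that Billingsley's proof also follows: truncate to a bounded martingale difference array (noting that $\E[\xi_{n,k}\,|\,\CF_{n,k-1}]=0$ makes the correction predictable and $L^{1}$-small by the Lindeberg condition), introduce a predictable stopping index so the conditional quadratic variation is deterministically capped, and control $\E\bigl[\exp\bigl(it\hat S_n+\tfrac{t^{2}}{2}\hat V_n\bigr)\bigr]$ by a telescoping conditional Taylor expansion with remainders of order $\delta\,\hat\sigma_{n,k}^{2}$. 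Your handling of the two limits --- send $n\to\infty$ at fixed $\delta$ to kill the truncation and localization errors, then let $\delta\downarrow 0$ to kill the $O(\delta)$ defect in $\E[W_{n,n}]$ --- is exactly the point where careless versions of this proof go wrong, and you have it in the right order. One small bookkeeping note: the bound $b^{2}\le\delta\,\bigl|\E[\xi_{n,k}\1_{|\xi_{n,k}|>\delta}\,|\,\CF_{n,k-1}]\bigr|$ you use to control $\tilde\sigma_{n,k}^{2}-\E[\xi_{n,k}^{2}\,|\,\CF_{n,k-1}]$ can be pushed one step further to $\le\E[\xi_{n,k}^{2}\1_{|\xi_{n,k}|>\delta}\,|\,\CF_{n,k-1}]$ via $|x|\le x^{2}/\delta$ on $\{|x|>\delta\}$, so both error terms are in fact dominated by the single Lindeberg sum; this streamlines the claim that $\sum_{k}\tilde\sigma_{n,k}^{2}\stackrel{p}{\to}\sigma^{2}$.
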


\begin{lem}[\cite{sherman1950adjustment}, Sherman-Morrison formula]\label{lem: Sherman-Morrison formula}
	Let $A$ and $A+ve_{k}^{t}$ be two invertible matrices, where $v\in \C^{n}$. Then
	\begin{align*}
	(A+ve_{k}^{t})^{-1}v=\frac{A^{-1}v}{1+e_{k}^{t}A^{-1}v}.
	\end{align*}
\end{lem}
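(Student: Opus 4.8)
The plan is to verify the claimed identity directly, by checking that the vector on the right-hand side is mapped to $v$ under multiplication by $A+ve_{k}^{t}$. Write $w:=A^{-1}v\in\C^{n}$, which is well defined since $A$ is invertible. If $v=0$ the identity reads $0=0$ and there is nothing to prove, so assume $v\neq 0$; then $w\neq 0$ as well.

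The core of the argument is a single line. Since $e_{k}^{t}w$ is a scalar,
\begin{align*}
(A+ve_{k}^{t})\,w = Aw + v\,(e_{k}^{t}w) = v + (e_{k}^{t}w)\,v = (1+e_{k}^{t}w)\,v.
\end{align*}
It then remains only to observe that the scalar $1+e_{k}^{t}w = 1+e_{k}^{t}A^{-1}v$ is nonzero: if it vanished, the display above would give $(A+ve_{k}^{t})w=0$ with $w\neq 0$, contradicting the hypothesis that $A+ve_{k}^{t}$ is invertible. Dividing the display by this nonzero scalar yields
\begin{align*}
(A+ve_{k}^{t})\left(\frac{A^{-1}v}{1+e_{k}^{t}A^{-1}v}\right) = v,
\end{align*}
and applying $(A+ve_{k}^{t})^{-1}$ to both sides gives the assertion.

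There is no genuine obstacle here: the only point needing a word of justification is the non-vanishing of the denominator, which is forced by the standing invertibility assumption. (One could alternatively deduce the statement from the full Sherman--Morrison--Woodbury identity $(A+ve_{k}^{t})^{-1}=A^{-1}-\frac{A^{-1}ve_{k}^{t}A^{-1}}{1+e_{k}^{t}A^{-1}v}$ by applying it to $v$, but the direct verification above is shorter and self-contained.)
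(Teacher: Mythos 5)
Your proof is correct. The paper does not actually prove this lemma; it is stated as a known fact with a citation to Sherman and Morrison (1950) and the remark that ``interested readers may find the proofs in the included references.'' Your direct verification --- applying $A+ve_k^t$ to the candidate vector, using that $e_k^t A^{-1}v$ is a scalar, and then noting that the denominator cannot vanish precisely because $A+ve_k^t$ is assumed invertible and $A^{-1}v\neq 0$ when $v\neq 0$ --- is the standard self-contained argument, and you handle the $v=0$ degeneracy cleanly. There is nothing to compare against in the paper, and no gap in your reasoning.
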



\bibliographystyle{abbrv}

\begin{thebibliography}{10}
	
	\bibitem{adhikari2019linear}
	K.~Adhikari, I.~Jana, and K.~Saha.
	\newblock Linear eigenvalue statistics of random matrices with a variance
	profile.
	\newblock {\em arXiv preprint arXiv:1901.09404}, 2019.
	
	\bibitem{ameur2011fluctuations}
	Y.~Ameur, H.~Hedenmalm, and N.~Makarov.
	\newblock Fluctuations of eigenvalues of random normal matrices.
	\newblock {\em Duke mathematical journal}, 159(1):31--81, 2011.
	
	\bibitem{anderson2010introduction}
	G.~W. Anderson, A.~Guionnet, and O.~Zeitouni.
	\newblock {\em An introduction to random matrices}.
	\newblock Number 118. Cambridge University Press, 2010.
	
	\bibitem{anderson2006clt}
	G.~W. Anderson and O.~Zeitouni.
	\newblock A {CLT} for a band matrix model.
	\newblock {\em Probab. Theory Related Fields}, 134(2):283--338, 2006.
	
	\bibitem{bai1997circular}
	Z.~Bai et~al.
	\newblock Circular law.
	\newblock {\em The Annals of Probability}, 25(1):494--529, 1997.
	
	\bibitem{bandeira2016sharp}
	A.~S. Bandeira and R.~Van~Handel.
	\newblock Sharp nonasymptotic bounds on the norm of random matrices with
	independent entries.
	\newblock {\em The Annals of Probability}, 44(4):2479--2506, 2016.
	
	\bibitem{bauerschmidt2016two}
	R.~Bauerschmidt, P.~Bourgade, M.~Nikula, and H.-T. Yau.
	\newblock The two-dimensional coulomb plasma: quasi-free approximation and
	central limit theorem.
	\newblock {\em arXiv preprint arXiv:1609.08582}, 2016.
	
	\bibitem{bekerman2018clt}
	F.~Bekerman, T.~Lebl{\'e}, S.~Serfaty, et~al.
	\newblock Clt for fluctuations of $\beta $-ensembles with general potential.
	\newblock {\em Electronic Journal of Probability}, 23, 2018.
	
	\bibitem{billingsley2008probability}
	P.~Billingsley.
	\newblock {\em Probability and measure}.
	\newblock John Wiley \& Sons, 2008.
	
	\bibitem{bogachev1991level}
	L.~V. Bogachev, S.~A. Molchanov, and L.~A. Pastur.
	\newblock On the level density of random band matrices.
	\newblock {\em Mathematical Notes}, 50(6):1232--1242, 1991.
	
	\bibitem{bourgade2016fixed}
	P.~Bourgade, L.~Erd{\H{o}}s, H.-T. Yau, and J.~Yin.
	\newblock Fixed energy universality for generalized wigner matrices.
	\newblock {\em Communications on Pure and Applied Mathematics},
	69(10):1815--1881, 2016.
	
	\bibitem{cook2016limiting}
	N.~Cook, W.~Hachem, J.~Najim, and D.~Renfrew.
	\newblock Limiting spectral distribution for non-hermitian random matrices with
	a variance profile.
	\newblock {\em arXiv preprint arXiv:1612.04428}, 2016.
	
	\bibitem{cook2018non}
	N.~Cook, W.~Hachem, J.~Najim, and D.~Renfrew.
	\newblock Non-hermitian random matrices with a variance profile (i):
	deterministic equivalents and limiting esds.
	\newblock {\em Electron. J. Probab.}, 23:61 pp., 2018.
	
	\bibitem{coston2018gaussian}
	N.~Coston and S.~O’Rourke.
	\newblock Gaussian fluctuations for linear eigenvalue statistics of products of
	independent iid random matrices.
	\newblock {\em Journal of Theoretical Probability}, pages 1--72, 2018.
	
	\bibitem{dubach2019words}
	G.~Dubach and Y.~Peled.
	\newblock On words of non-hermitian random matrices.
	\newblock {\em arXiv preprint arXiv:1904.04312}, 2019.
	
	\bibitem{edelman1997probability}
	A.~Edelman.
	\newblock The probability that a random real gaussian matrix has k real
	eigenvalues, related distributions, and the circular law.
	\newblock {\em Journal of Multivariate Analysis}, 60(2):203--232, 1997.
	
	\bibitem{geman1980limit}
	S.~Geman.
	\newblock A limit theorem for the norm of random matrices.
	\newblock {\em The Annals of Probability}, 8(2):252--261, 1980.
	
	\bibitem{geman1986spectral}
	S.~Geman.
	\newblock The spectral radius of large random matrices.
	\newblock {\em The Annals of Probability}, pages 1318--1328, 1986.
	
	\bibitem{girko1985circular}
	V.~L. Girko.
	\newblock Circular law.
	\newblock {\em Theory of Probability \& Its Applications}, 29(4):694--706,
	1985.
	
	\bibitem{gorin2018gaussian}
	V.~Gorin and Y.~Sun.
	\newblock Gaussian fluctuations for products of random matrices.
	\newblock {\em arXiv preprint arXiv:1812.06532}, 2018.
	
	\bibitem{jana2014fluctuations}
	I.~Jana, K.~Saha, and A.~Soshnikov.
	\newblock Fluctuations of linear eigenvalue statistics of random band matrices.
	\newblock {\em Theory of Probability \& Its Applications}, 60(3):407--443,
	2016.
	
	\bibitem{johansson1998fluctuations}
	K.~Johansson et~al.
	\newblock On fluctuations of eigenvalues of random hermitian matrices.
	\newblock {\em Duke mathematical journal}, 91(1):151--204, 1998.
	
	\bibitem{khorunzhy2004spectral}
	A.~Khorunzhy.
	\newblock On spectral norm of large band random matrices.
	\newblock {\em arXiv preprint math-ph/0404017}, 2004.
	
	\bibitem{latala2018dimension}
	R.~Latala, R.~van Handel, and P.~Youssef.
	\newblock The dimension-free structure of nonhomogeneous random matrices.
	\newblock {\em Inventiones mathematicae}, 214(3):1031--1080, 2018.
	
	\bibitem{leble2018fluctuations}
	T.~Lebl{\'e} and S.~Serfaty.
	\newblock Fluctuations of two dimensional coulomb gases.
	\newblock {\em Geometric and Functional Analysis}, 28(2):443--508, 2018.
	
	\bibitem{li2013central}
	L.~Li and A.~Soshnikov.
	\newblock Central limit theorem for linear statistics of eigenvalues of band
	random matrices.
	\newblock {\em Random Matrices: Theory and Applications}, 2(04), 2013.
	
	\bibitem{lytova2009central}
	A.~Lytova and L.~Pastur.
	\newblock Central limit theorem for linear eigenvalue statistics of random
	matrices with independent entries.
	\newblock {\em The Annals of Probability}, 37(5):1778--1840, 2009.
	
	\bibitem{marengo2017geometric}
	J.~E. Marengo, D.~L. Farnsworth, and L.~Stefanic.
	\newblock A geometric derivation of the irwin-hall distribution.
	\newblock {\em International Journal of Mathematics and Mathematical Sciences},
	2017, 2017.
	
	\bibitem{medhurst1965evaluation}
	R.~Medhurst and J.~Roberts.
	\newblock Evaluation of integral in (b)[2/pi infinity o (sinx/x) ncos (bx) dx.
	\newblock {\em Mathematics of Computation}, 19(89):113, 1965.
	
	\bibitem{nourdin_peccati_2010}
	I.~Nourdin and G.~Peccati.
	\newblock Universal {G}aussian fluctuations of non-{H}ermitian matrix
	ensembles: from weak convergence to almost sure {CLT}s.
	\newblock {\em ALEA Lat. Am. J. Probab. Math. Stat.}, 7:341--375, 2010.
	
	\bibitem{o2016central}
	S.~O’Rourke and D.~Renfrew.
	\newblock Central limit theorem for linear eigenvalue statistics of elliptic
	random matrices.
	\newblock {\em Journal of Theoretical Probability}, 29(3):1121--1191, 2016.
	
	\bibitem{rider2004deviations}
	B.~Rider.
	\newblock Deviations from the circular law.
	\newblock {\em Probability theory and related fields}, 130(3):337--367, 2004.
	
	\bibitem{rider2006gaussian}
	B.~Rider and J.~W. Silverstein.
	\newblock Gaussian fluctuations for non-hermitian random matrix ensembles.
	\newblock {\em The Annals of Probability}, pages 2118--2143, 2006.
	
	\bibitem{rider2007noise}
	B.~Rider and B.~Vir{\'a}g.
	\newblock The noise in the circular law and the gaussian free field.
	\newblock {\em International Mathematics Research Notices}, 2007, 2007.
	
	\bibitem{shcherbina2015fluctuations}
	M.~Shcherbina.
	\newblock On fluctuations of eigenvalues of random band matrices.
	\newblock {\em Journal of Statistical Physics}, 161(1):73--90, 2015.
	
	\bibitem{sherman1950adjustment}
	J.~Sherman and W.~J. Morrison.
	\newblock Adjustment of an inverse matrix corresponding to a change in one
	element of a given matrix.
	\newblock {\em The Annals of Mathematical Statistics}, 21(1):124--127, 1950.
	
	\bibitem{soshnikov1998tracecentral}
	Y.~Sinai and A.~Soshnikov.
	\newblock Central limit theorem for traces of large random symmetric matrices
	with independent matrix elements.
	\newblock {\em Boletim da Sociedade Brasileira de Matemática -
		Bulletin/Brazilian Mathematical Society}, 29(1):1--24, 1998.
	
	\bibitem{tao2008random}
	T.~Tao and V.~Vu.
	\newblock Random matrices: the circular law.
	\newblock {\em Communications in Contemporary Mathematics}, 10(02):261--307,
	2008.
	
	\bibitem{tao2010random}
	T.~Tao, V.~Vu, and M.~Krishnapur.
	\newblock Random matrices: universality of esds and the circular law.
	\newblock {\em The Annals of Probability}, 38(5):2023--2065, 2010.
	
	\bibitem{titchmarsh1939theory}
	E.~C. Titchmarsh.
	\newblock The theory of functions.
	\newblock 1939.
	
\end{thebibliography}

\end{document}